  \newcommand{\nc}{\newcommand}
  \newcommand{\renc}{\renewcommand} 
\def\wt{\widetilde}
\def\ov{\overline}
\def\to{\rightarrow}
\nc{\Br}{\mathcal{B}}
\nc{\id}{id}
\renc{\P}{\mathbb{P}}
\renc{\O}{\mathcal{O}}
\nc{\N}{\mathbb{N}}
\renc{\H}{\mathcal{H}}
\nc{\CC}{\mathcal{C}}
\nc{\F}{\mathcal{F}}
\nc{\G}{\mathcal{G}}
\nc{\Fq}{\mathbb{F}_q}
\nc{\Fqn}{\mathbb{F}_{q^n}}
\nc{\Q}{\mathbb{Q}}
\nc{\Ql}{\mathbb{Q}_\ell}
\nc{\Qlb}{\mathbbm{k}} 
\nc{\tG}{\wt G}
\nc{\CS}{\mathscr{C\!S}_{\!\!\mathbbm{1}}}
\nc{\IndGT}{\iIC_{1}}
\nc{\Ind}{\operatorname{Ind}}
\nc{\Sym}{\operatorname{Sym}}
\nc{\Ext}{\operatorname{Ext}}
\nc{\uk}{\underline{k}}
\nc{\triright}{\stackrel{[1]}{\to}}
\nc{\Ga}{\mathbb{G}_a} 
\nc{\Gm}{\mathbb{G}_m} 
\nc{\Loc}{\mathcal{L}}
\nc{\gG}{\Gamma}
\nc{\kos}[2]{\EuScript{K}_{#1,#2}}
\nc{\bet}{b}
\nc{\phc}{\Phi}
\nc{\vp}{\varphi}
\nc{\betT}{b_T}
\nc{\de}{\delta}
\nc{\QT}{Q}
\nc{\HT}{S}
\nc{\KM}{\EuScript{A}}
\nc{\ep}{\epsilon}
\nc{\Bi}{\mathbf{i}}
\nc{\BB}{B\times B}
\nc{\TT}{T\times T}
\nc{\BD}{B_\Delta}
\nc{\GD}{G_\Delta}
\nc{\C}{\mathbb{C}}
\nc{\R}{\mathbb{R}}
\nc{\Cs}[1]{\underline{\Qlb}_{#1}}
\nc{\IH}{I\!H}
\nc{\IC}{\mathbf{IC}}
\nc{\iIC}{\mathbf{K}}
\nc{\Gw}{G_w}
\nc{\up}{\vp^G_H}
\nc{\Kw}{K_w}
\nc{\B}{\mathcal{B}}
\nc{\SU}[1]{\mathrm{SU}(#1)}
\nc{\SL}[1]{{SL}_{#1}}
\nc{\GL}[1]{\mathrm{GL}(#1)}
\nc{\HU}[1]{\mathbb{H}\mathrm{U}(#1)}
\nc{\rank}{\mathrm{rk}}
\nc{\ft}{\mathfrak t}
\nc{\td}{\t^*}
\nc{\pur}[1]{\cF_\be^{#1}}
\nc{\Z}{\mathbb{Z}}
\nc{\fg}{\mathfrak g}
\nc{\tg}{\tilde \fg}
\nc{\HG}{H_G}
\renc{\k}{\mathbf{k}}
\nc{\Si}{\S i}
\nc{\hc}{\mathbb{H}^*}
\nc{\mc}{\mathcal}
\nc{\Hom}{\mathrm{Hom}}
\nc{\ti}{\tilde}
\renc{\O}{\mathcal {O}}
\nc{\hcBB}{\hc_{B\times B}}
\nc{\si}{\sigma}
\nc{\al}{\alpha}
\nc{\HH}{H\!H}
\nc{\Tor}{\mathrm{Tor}}
\nc{\KR}{\mc{KR}}
\nc{\Supp}{\mathrm{Supp}}
\nc{\ASu}{\mathrm{Supp}'}
\nc{\tri}{\tau}
\nc{\ext}{\mathrm{ext}}
\nc{\baet}[1]{\bar{e}(#1,t)}
\nc{\bht}[1]{\bar{h}(#1,t)}
\nc{\A}{\mathcal{A}}
\nc{\AH}{\A_H}
\nc{\AG}{\A_G}
\nc{\Lotimes}{\stackrel{L}{\otimes}}
\nc{\be}{\beta}
\nc{\Stosic}{Sto\v{s}i\'c\xspace}
\nc{\ga}{\gamma}
\nc{\BS}[1]{G_{#1}}
\nc{\BSi}{\BS\Bi}
\nc{\Bf}{\mathbf{f}}
\nc{\excise}[1]{}
\nc{\Bn}{\mathbf{n}}
\nc{\Ba}{\mathbf{a}}
\nc{\ubl}[1]{{P}_{#1}}
\nc{\ubr}[1]{{_{#1}}P}
\newcommand{\becircled}{\mathaccent "7017}
\nc{\oX}{\becircled X}
\nc{\cF}{\mathcal{F}}
\nc{\cG}{\mathcal{G}}
\nc{\om}{\omega}
\nc{\ublr}{\ub{\Bn}\times\ub{\Bn}}
\nc{\Sl}{S_\Bn}
\nc{\weight}{\mathrm{wt}\,}
\nc{\PP}{\mathbf{P}}
\nc{\D}{\mathbb{D}}
\nc{\End}{\mathrm{End}}
\nc{\codim}{\mathrm{codim}}
\nc{\Tr}{\mathrm{Tr}}
\nc{\dgmod}{\mathrm{dgMod}}
\nc{\res}{\mathrm{res}}
\nc{\ind}{\mathrm{ind}}
\nc{\ub}[1]{P(#1)}
\nc{\bsi}{\boldsymbol{\be}}
\nc{\Dbm}{D^b_{mix}}
\nc{\cE}{\mathcal{E}}
\nc{\cV}{\mathcal{V}}
\nc{\cW}{\mathcal{W}}
\nc{\FF}{\mathbb{F}}
\nc{\Hec}{\mathbf{H}}
\nc{\bFb}{\mathbf{F}^\bullet}
\nc{\bGb}{\mathbf{G}^\bullet}
\nc{\bM}{\mathbf{M}}
\nc{\Fr}{\mathrm{Fr}}
\renc{\AA}{\mathbb{A}}
\nc{\pH}{{}^p\mathcal{H}}
\nc{\Eul}{\EuScript{E}}
\nc{\K}{\EuScript{K}}
\nc{\modu}{\mathsf{mod}}
\DeclareMathOperator{\Spec}{Spec}
\DeclareMathOperator{\Res}{Res}
\nc{\pt}{\mathrm{pt}}
\nc{\gio}{\mathfrak{I}}
\nc{\gtw}{\mathfrak{S}}
\nc{\avg}{\operatorname{avg}}
\nc{\nGB}{R_W(q)}
\nc{\nB}{S_G(q)}
\def\re
\p@\hbox{.}\mkern2mu\raise7\p@\hbox{.}\mkern1mu}} 
  \newtheorem{thm}{Theorem}
  \newtheorem{defi}[thm]{Definition}
  \newtheorem{prop}[thm]{Proposition}
  \newtheorem{cor}[thm]{Corollary}
  \newtheorem*{theorem*}{Theorem}
  \theoremstyle{remark}
  \newtheorem{remark}{Remark}
  \newtheorem{question}{Question}
  \nc{\helv}{\fontfamily{phv}\selectfont}
\begin{document}
\begin{center}
{\LARGE\bf  The geometry of Markov traces}
\vspace{10mm}

  \begin{tabular}{c@{\hspace{20mm}}c}
    {\sc\large Ben Webster}& {\sc\large Geordie Williamson}\\
   \it Department of Mathematics,&\it Mathematical Institute,\\ 
    \it University of Oregon &\it University of Oxford
 \end{tabular}
\vspace{1mm}

Email: {\helv bwebster@math.mit.edu}\\
{\helv geordie.williamson@maths.ox.ac.uk}
\vspace{1mm}

\end{center}
\medskip

{\small
\begin{quote}
  {\it Abstract.}
  We give a geometric interpretation of the Jones-Ocneanu trace on the
  Hecke algebra, using the equivariant cohomology of sheaves on $\SL
  n$.
  This construction makes sense for all simple algebraic groups, so we obtain
  a generalization of the Jones-Ocneanu trace to Hecke algebras of
  other types. We give a geometric expansion of this trace in terms of the irreducible characters of the Hecke algebra, and conclude that it agrees with a trace defined independently by Gomi.
  
  Based on our proof, we also prove that certain simple perverse
  sheaves on a reductive algebraic group $G$ are equivariantly formal for the conjugation action
  of a Borel $B$, or equivalently, that the Hochschild homology of any Soergel
  bimodule is free, as the authors had previously conjectured.

  This construction is closely tied to knot homology.  This
  interpretation of the Jones-Ocneanu trace is a more elementary manifestation
  of the geometric construction of HOMFLYPT homology given by the
  authors in
  \cite{WWcol}.
\end{quote}}

\renc{\thesubsection}{\arabic{subsection}}

\subsection{Introduction}
\label{sec:introduction}

The mid 1980's saw a remarkable burst of knot invariants appearing from surprising directions; one of the foremost was the HOMPLYPT polynomial, independently discovered by several groups of researchers (see \cite{HOMFLY,PT}). In \cite{Jon87} this invariant was then interpreted by Jones in terms of a trace, now known as the {\bf Jones-Ocneanu trace}, on the Hecke algebra $\Hec_n$. In this original paper, this trace is constructed using an inductive procedure, but Jones asks whether it has some direct interpretation in terms of the Kazhdan-Lusztig basis of \cite{KL79}.

In this paper, we describe a relationship between this trace on a Hecke
algebra and the geometry of the special linear group $G$, which passes through the ``Hecke category'' of certain equivariant sheaves on $G$. Since the Kazhdan-Lusztig basis can also be defined in terms of the Hecke category, this provides an answer to Jones' question:

\begin{theorem*}
  The Jones-Ocneanu trace applied to the Kazhdan-Lusztig basis element
  $C^{\prime}_w$ of a permutation $w$ is the mixed Poincar\'e polynomial of the $\BD$-equivariant   intersection cohomology of the Bruhat variety $\ov {BwB}$.  Equivalently, it is the
  bigraded dimension of the Hochschild homology of an indecomposible
  Soergel bimodule $S_w$.
\end{theorem*}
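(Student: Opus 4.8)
\medskip
\noindent\textbf{Proof strategy.} The plan is to realise the right-hand side as a trace on $\Hec_n$ and then pin it down using Jones' uniqueness characterisation of the Jones--Ocneanu trace, so the argument splits into a soft categorical part and a genuinely geometric part which carries the weight. For the dictionary between the two forms of the answer: by the geometric theory of Soergel bimodules, $\mathbb H^\ast_{\BB}\bigl(\IC(\ov{BwB})\bigr)$ is the indecomposable Soergel bimodule $S_w$ over $R=H^\ast_B(\pt)$, and by Soergel's conjecture (a theorem in type $A$) the split Grothendieck group of Soergel bimodules is $\Hec_n$ with $[S_w]=C'_w$. Now $\BD\hookrightarrow\BB$ acts on $G$ by conjugation, preserving each $\ov{BwB}$, the fibre $\BB/\BD$ of the corresponding map of classifying spaces is homotopy equivalent to $T$, and the induced map $H^\ast_{\BB}(\pt)=R\otimes R\to R=H^\ast_{\BD}(\pt)$ is multiplication; an Eilenberg--Moore comparison then identifies $\IH^\ast_{\BD}(\ov{BwB})$ with $R\Lotimes_{R\otimes R}S_w=\HH_\bullet(S_w)$ once the relevant formality is known, matching cohomological degree and weight on the left with Hochschild degree and internal grading on the right. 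Thus the two formulations coincide, modulo an equivariant-formality statement returned to below.

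Next I would define $\epsilon_n\colon\Hec_n\to\Z[q^{\pm1},a^{\pm1}]$ by letting $\epsilon_n(C'_w)$ be the graded rank of $\HH_\bullet(S_w)$ over the diagonal copy of $R$, with $q$ recording the internal grading and $a$ the Hochschild degree. Since $\HH_\bullet$ is additive and commutes with shifts, $\epsilon_n$ descends from the split Grothendieck group and so is well defined on $\Hec_n$; and since Soergel bimodules are free over $R$ on each side, $\Lotimes_R=\otimes_R$, so the cyclicity isomorphism $\HH_\bullet(M\otimes_R N)\cong\HH_\bullet(N\otimes_R M)$ together with $[S_u\otimes_R S_v]=C'_uC'_v$ gives $\epsilon_n(C'_uC'_v)=\epsilon_n(C'_vC'_u)$; by bilinearity $\epsilon_n$ is a trace.

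It then remains to match $\epsilon_n$ with the (appropriately normalised) Ocneanu trace, which by Jones is the unique family of traces with the prescribed value on $\Hec_1$ and the Markov property $\tau_n(x\,T_{n-1}^{\pm1})=z_{\pm}\,\tau_{n-1}(x)$ for $x\in\Hec_{n-1}$. The normalisation is a Koszul computation: $\HH_\bullet(R_n)$ has graded rank a product of binomials in $a$ and $q$, matching the value of the Ocneanu trace on the identity. The Markov property is the crux, and I would prove it geometrically by induction on $n$: for $w\in S_{n-1}$, relate $\IH^\ast_{\BD}(\ov{B_nwB_n})$ inside $\SL n$ to $\IH^\ast_{\BD}(\ov{B_{n-1}wB_{n-1}})$ inside $\SL{n-1}$ through an explicit affine-bundle projection, and analyse $\ov{B_n(ws_{n-1})B_n}$ as a $\P^1$-fibred variety over $\ov{B_nwB_n}$ using the decomposition theorem and Leray--Hirsch, all $\BD$-equivariantly; this produces exactly the factor $z$. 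Running this induction at the same time establishes that $\HH_\bullet(S_w)$ is free over $R$, i.e.\ the conjugation-equivariant formality of $\IC(\ov{BwB})$, which makes ``bigraded dimension'' an honest polynomial and fills the gap left in the dictionary.

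An alternative route, hinted at in the abstract, decomposes $\IH^\ast_{\BD}(\ov{BwB})$ along conjugation strata (or in terms of character sheaves) into an expansion $\sum_\lambda c_\lambda\chi_\lambda(C'_w)$ in irreducible characters of $\Hec_n$, identifies the weights $c_\lambda$, recognises Gomi's trace, and invokes its known coincidence with the Jones--Ocneanu trace. Either way, the main obstacle is the Markov step, equivalently the equivariant formality: the trace property and the well-definedness of $\epsilon_n$ are formal, and the normalisation is short, but controlling the $\P^1$-bundles and the grading/weight bookkeeping in the passage from $\SL{n-1}$ to $\SL n$ — and thereby proving the freeness of $\HH_\bullet(S_w)$ — is the technical heart of the argument.
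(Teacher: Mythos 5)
Your overall strategy — realize the right-hand side as a linear form on $\Hec_n$, prove it is a trace via the symmetry of Hochschild homology (equivalently, the K\"unneth argument $\hc_{\BD}(\F\star\F')\cong\hc_{\BD}(\F'\star\F)$), pin down the normalization, and then close with the Markov property — is exactly the skeleton of the paper's proof of Theorem \ref{main-theorem}, and your use of \cite[Theorem 1.4]{WW} for the dictionary between $\IH_{\BD}^\ast$ and $\HH_\bullet(S_w)$ is also what the paper does. However, the Markov step as you describe it has a genuine gap. You propose to ``analyse $\ov{B_n(ws_{n-1})B_n}$ as a $\P^1$-fibred variety over $\ov{B_nwB_n}$'' and apply Leray--Hirsch $\BD$-equivariantly, but $\ov{B(ws)B}$ is \emph{not} a $\P^1$-bundle over $\ov{BwB}$: the relevant $\P^1$-fibration lives on the Bott--Samelson space $P_s\times^B\ov{BwB}$, which maps birationally (not fibrationally) onto $\ov{B(ws)B}$, and more importantly the diagonal conjugation action of $B$ is not compatible with any naive fibre structure, so Leray--Hirsch does not apply directly. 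In addition, $\sigma_{n-1}\iota(a)$ is not a Kazhdan--Lusztig basis element, so the trace of $\sigma_{n-1}\iota(a)$ is not the Poincar\'e polynomial of a single $\IC$-sheaf; one must work with the convolution $\Cs{P}\star\gio_*\F$. The step that actually carries the weight in the paper is a different geometric fact: writing $P=LU$ for the minimal parabolic corresponding to the new simple root, with $Z=(\ker\alpha)^0$, $H=L/Z$, and $\widetilde{G'}=G'T$, there is an acyclic $T\times T$-equivariant map $P\times_T\widetilde{G'}\to H\times G'$ on which the conjugation actions of $Z$ on $H$ and of the remaining $\Gm$-factor on $G'$ are trivial; this gives the K\"unneth factorization $\hc_{T_\Delta}(\Cs{P}\star\gio_*\F)\cong\hc_{(T_v)_\Delta}(H)\otimes\hc_{Z_\Delta}(G',\F)$ and with it the explicit factor $\frac{q^{-1/2}+qt}{1-q}$ whose combination with $\sigma_v=C'_v-q^{-1/2}$ produces $-t$. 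Also note that the ``affine-bundle'' relating $\SL{n-1}$ to $\SL{n}$ for $\Tr(\iota(a))$ is really a $\Gm$-factor (via $\widetilde{G'}\cong G'\times\Gm$), not an affine space, which is what yields $\frac{1+q^{1/2}t}{1-q}$.

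Your plan to prove equivariant formality ``by running the same induction'' is a genuinely different route from the paper, and is essentially Rasmussen's type-$A$ argument. The paper instead deduces formality uniformly in all types from character-sheaf theory: after averaging to $\GD$, it uses Lusztig's block decomposition (Proposition \ref{block}) to kill everything not induced from $T$, the purity of $\iIC_w$ to get $\GD$-formality, and a depth/Cohen--Macaulay argument to descend from $\GD$- to $\BD$-formality. Your inductive approach would in principle work in type $A$ but would need to be spelled out carefully (one must track that the relevant spectral sequences degenerate with free outputs at each stage), whereas the paper's route buys the result in all types at once and is what makes the character-expansion argument of Proposition \ref{weight} available.
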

Here, $B$ denotes the subgroup of upper triangular matrices of $G$ and $\BD$ denotes that subgroup acting by conjugation on $G$.


This geometric construction works for reductive
algebraic groups of all types, providing a natural two-variable trace on the Hecke
algebra associated to any Dynkin diagram. Let $\Gamma$ be a Dynkin diagram, $G$ the corresponding split adjoint group over $\mathbb{F}_q$, $B$ a Borel subgroup, $W_{\Gamma}$ its Weyl group and $\Hec_{\Gamma}$ the Hecke algebra of $W$.

\begin{defi}
  For any Dynkin diagram $\Gamma$, we define the trace
  $\Tr_\Gamma:\Hec_\Gamma\to \Z(q^{\nicefrac 12})[t]$ to be the unique
  linear map sending a Kazhdan-Lusztig basis vector for $w\in
  W_\Gamma$ to the mixed Poincar\'e polynomial of the $\BD$-equivariant intersection cohomology of the Bruhat variety $\ov{BwB}$.
\end{defi}
Generalizing the
Jones-Ocneanu trace to other types has been an active avenue of
research for some years.  Traces satisfying a Markov-type condition on
Hecke algebras of classical type were classified by Geck and
Geck-Lambropoulou \cite{Geck,GL}, but such traces form an infinite
dimensional vector space.  Gomi \cite{Gomi} constructed a special
trace on the Hecke algebra of any finite type Coxeter group (even non-crystallographic). He constructs his trace by giving an explicit expression in terms of the irreducible characters of the Hecke algebra, and shows that it satisfies a strong Markov condition via case-by-case analysis.

Using the connection between the Hecke category on $G$ and the theory
of character sheaves, we prove our main result (for more precise statements
see Theorem \ref{main-theorem} and Corollary \ref{final}).  

\begin{theorem*}
  The geometrically defined trace $\Tr_\Gamma$ satisfies the following properties:
  \begin{itemize}
  \item It satisfies a Markov condition analogous to that of the Jones-Ocneanu
    trace.
  \item For Weyl groups, it coincides with the trace defined by Gomi \cite{Gomi}.
  \item Its expansion in terms of simple
    characters is given explicitly in terms of Molien series and
    Lusztig's Fourier transform.
  \end{itemize} 
\end{theorem*}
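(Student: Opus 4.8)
The plan is to establish the three properties in sequence, each building on the geometric description of $\Tr_\Gamma$ via the $\BD$-equivariant intersection cohomology of Bruhat varieties.

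\medskip

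\emph{The Markov condition.} First I would translate the Markov relations into statements about the Hecke category and the associated sheaves on $G$. The invariance under conjugation, $\Tr_\Gamma(ab) = \Tr_\Gamma(ba)$, should follow formally from the fact that $\BD$-equivariant cohomology is a trace-like invariant: closing up a braid corresponds to taking (derived) global sections of the appropriate convolution, and convolution composed with pushforward to a point is insensitive to cyclic permutation. The more delicate ``stabilization'' relation — relating the trace of an element of $\Hec_\Gamma$ to the trace of its image in a parabolic subalgebra after multiplying by a simple generator — requires understanding how the Bruhat varieties and their intersection cohomology behave under the inclusion of a Levi. Here I would use the geometry of the projection $G/B \to G/P$ for a minimal parabolic $P$, and a careful bookkeeping of the Tate twists coming from the $\P^1$-fibers, to extract the scalar by which the trace changes. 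The key technical input is the equivariant formality / Hochschild homology freeness result advertised in the abstract, which guarantees that the Poincaré polynomials multiply rather than interact in more complicated ways.

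\medskip

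\emph{Comparison with Gomi's trace.} Since both $\Tr_\Gamma$ and Gomi's trace are linear functionals on $\Hec_\Gamma$, it suffices to show they agree on a basis, and the natural one is the basis of irreducible characters. This is where the third property does the heavy lifting: once I have the explicit expansion of $\Tr_\Gamma$ in terms of simple characters (see below), I compare it term-by-term with Gomi's defining formula, which is also given in terms of irreducible characters. The match should reduce to an identity relating Molien series and the relevant degree data — precisely the combinatorial shadow of the geometry — to the coefficients appearing in Gomi's case-by-case definition. The point is that our geometric derivation explains, uniformly, the formula Gomi obtained by type-by-type computation.

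\medskip

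\emph{The character expansion.} This is the main obstacle, and the one where the connection to character sheaves is essential. The strategy is to decompose the intersection cohomology complexes on $G$ (or rather their pushforwards under the conjugation-quotient) into character sheaves, using Lusztig's classification; the $\BD$-equivariant cohomology of each Bruhat variety then becomes a sum over the relevant character sheaves, weighted by their ``functions'' evaluated appropriately. The coefficients that emerge are governed by Lusztig's Fourier transform on the space of class functions supported on unipotent (or, more precisely, the relevant cuspidal data), and the Molien series arise as the equivariant cohomology of the fixed-point data — i.e., as $\frac{1}{\prod(1-t^{d_i})}$-type factors coming from the polynomial invariants of $W_\Gamma$. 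Assembling these pieces into a clean closed formula, and checking it is compatible with the decomposition map $\Hec_\Gamma \to \bigoplus_\chi \mathrm{End}$, is the heart of the argument; I expect the bookkeeping of Tate twists and the precise normalization of the Fourier transform to be where the real work lies. Once this formula is in hand, both the Markov stabilization computation and the Gomi comparison become essentially mechanical.
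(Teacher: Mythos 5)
Your broad outline has the right overall shape — the character expansion does pass through $\ind^{G_\Delta}_{B_\Delta}$, Lusztig's character-sheaf theory and his Fourier transform, and the trace property does follow from the cyclic symmetry of equivariant hypercohomology under convolution. But several of the specific claims diverge from the actual argument, and two of these divergences are genuine gaps.

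On the Markov conditions: the paper does not use the projection $G/B \to G/P$. It works on $G$ itself, embedding the smaller group via $G' \hookrightarrow \widetilde{G'} = G'T \hookrightarrow G$, writing the minimal parabolic as $P = LU$ with $H = L/Z$ of rank one, and constructing an acyclic $T\times T$-equivariant map $P \times_T \widetilde{G'} \to H \times G'$; the Markov scalars then fall out of K\"unneth isomorphisms for $T_\Delta$-equivariant cohomology together with the shift-twist rule \eqref{shifttwist}. Contrary to your claim, the equivariant formality / Hochschild freeness result is \emph{not} an input here: those K\"unneth isomorphisms are for products of spaces with product group actions, so they tensor over the ground field $\Qlb$, not over $H^*(BT)$, and no freeness is needed. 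Formality is in fact \emph{derived} from the character-sheaf analysis (Theorem \ref{formal} and its corollary), not assumed. More seriously, the extra Markov relations specific to types $B$, $C$, $D$ (the $T_{n-1}$ and $U_{2n-1}$ conditions of Definition \ref{markov-B}) are not proved geometrically at all — the paper deduces them from the Gomi comparison plus Geck--Lambropoulou, and explicitly records that it does not know a geometric proof. Your suggestion that the full Markov statement becomes ``mechanical'' from the character expansion would amount to redoing Gomi's case-by-case verification, which is precisely what the geometric approach is meant to bypass for the type-$A$-style relations.

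On the character expansion: your sketch omits the essential vanishing lemma. Decomposing $\iIC_w$ by the Decomposition Theorem yields \emph{all} unipotent character sheaves as summands, not only the Springer ones $\cV_\chi$. The clean formula in Proposition \ref{weight} only holds because the $G_\Delta$-equivariant cohomology of every character sheaf not induced from the torus vanishes — this is Proposition \ref{block}, an equivariant upgrade of Lusztig's block/orthogonality statement, applied to kill the $\widehat{G}(1)_{ex}$ terms in Lusztig's expansion \eqref{lus-Kw}. Without it the sum has uncontrolled extra terms. Finally, the Molien series do not appear as $1/\prod(1-t^{d_i})$ denominators (that is only the trivial character); the actual identification is $H^\bullet_{G_\Delta}(\cV_\nu) \cong \Hom_W(V_\nu, \Sym^\bullet(\ft^*)\otimes \wedge^\bullet\ft^*)$, proved by restricting to the regular semisimple locus and matching the $W$-action. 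That specific isomorphism is the load-bearing step that turns the Gomi comparison in Corollary \ref{final} from an analogy into an equality.
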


This gives a geometric interpretation of Gomi's
formula.
In fact, the decomposition of $\Tr_\Gamma$ on a Kazhdan-Lusztig basis element into ``almost characters'' of the Hecke algebra (the Fourier transform of the irreducible characters) is categorified by a decomposition of the $\BD$-equivariant intersection cohomology of $\ov{BwB}$ into summands whose mixed Poincar\'e series are given by Molien series. 
\medskip

In the course of our proof, we must show the following result, which is of some independent interest:
\begin{theorem*}
  Any simple $B\! \times \!B$-equivariant perverse sheaf on $G$ is
  $\BD$-equivariantly formal. Equivalently, the Hochschild homology of
  any Soergel bimodule is free.
\end{theorem*}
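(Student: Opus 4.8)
The plan is to reformulate the claim as the freeness of one concrete module and then to compute that module via the bridge to character sheaves. Write $R:=\Sym(\ft^*)=\mathbb{H}^*_{T_\Delta}(\pt)$, with $T_\Delta$ a maximal torus acting by conjugation. Since the unipotent radical of $B$ is cohomologically negligible in \'etale equivariant cohomology, $\mathbb{H}^*_{\BD}(G,\mathcal{F})=\mathbb{H}^*_{T_\Delta}(G,\mathcal{F})$ for every $\BB$-equivariant $\mathcal{F}$. A simple $\BB$-equivariant perverse sheaf $\IC_w$ on $G$ (supported on $\ov{BwB}$) corresponds under Soergel's functor to the indecomposable Soergel bimodule $S_w$, and restricting equivariance along the diagonal $\BD\hookrightarrow\BB$ identifies $\mathbb{H}^*_{\BD}(G,\IC_w)$ with $S_w\Lotimes_{R\otimes R}R=\HH_*(S_w)$. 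So ``$\IC_w$ is $\BD$-equivariantly formal'' is exactly the statement that $\mathbb{H}^*_{T_\Delta}(G,\IC_w)$ is a free graded $R$-module; since $\HH_*$ is additive and every Soergel bimodule is a sum of shifts of the $S_w$, it is enough to treat the $\IC_w$.

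I would then make the problem explicit. The obvious attack --- pave $G$ by $T_\Delta$-stable affines and run the purity/parity argument that works for Schubert varieties in $G/B$ --- fails: for a generic cocharacter the attracting and repelling loci of the conjugation action are opposite Borels, which do not cover $G$. Instead, use the $T_\Delta$-equivariant $B$-bundle $q\colon G\to G/B$ (conjugation on $G$ intertwines with left translation on $G/B$; note $\ov{BwB}=q^{-1}(\ov{X_w})$ and, as $q$ is smooth, $\IC_w\cong q^*\IC_{\ov{X_w}}$ up to shift). Since $q$ is Zariski-locally trivial with fibre $B$, the complex $Rq_*\underline{\Qlb}_G$ is, $T_\Delta$-equivariantly, a Koszul-type complex on $G/B$ in $r=\rank G$ variables built from $r$ tautological line bundles $\mathcal{L}_1,\dots,\mathcal{L}_r$; taking hypercohomology presents $\mathbb{H}^*_{T_\Delta}(G,\IC_w)$ as the cohomology of a Koszul complex $[M\otimes\Lambda\ft^*,d]$, where $M:=\mathbb{H}^*_T(G/B,\IC_{\ov{X_w}})$ is a free $R$-module (Schubert varieties being $T$-equivariantly formal) and $d$ is multiplication by the equivariant Chern classes $c_1^T(\mathcal{L}_i)$ --- in line with $\HH_*(S_w)=\Tor^{R\otimes R}(S_w,R)$. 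The whole problem is now: this Koszul cohomology is a free $R$-module. This is where the non-formality is concentrated, the $c_1^T(\mathcal{L}_i)$ being honest zero-divisors on $M$, and it is not amenable to soft homological algebra.

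To finish, I would invoke the link between the Hecke category and character sheaves that drives the paper's main theorem. Inducing along $\BD\subset\GD$ realizes $\mathbb{H}^*_{\BD}(G,\IC_w)$ as the $\GD$-equivariant hypercohomology of the pushforward of $\IC_w$ through the proper horocycle correspondence, a complex built from shifts of character sheaves; for $w=w_0$ this is the statement that $\underline{\Qlb}_G=\IC_{w_0}$ is a summand of the Grothendieck--Springer sheaf $R\pi_*\underline{\Qlb}_{\wt G}[\dim G]$, for which freeness can be checked directly. For general $w$, Lusztig's theory of character sheaves --- cleanness of cuspidals, the description of principal-series character sheaves via the Grothendieck--Springer sheaf, and the Fourier transform --- supplies the purity absent above and decomposes $\mathbb{H}^*_{\BD}(\ov{BwB},\IC)$ into summands indexed by Lusztig data whose mixed Poincar\'e polynomials are Molien series. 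A Molien series is the Hilbert series of a free graded $R$-module, so each summand, and hence $\mathbb{H}^*_{\BD}(\ov{BwB},\IC)$, is free, and the equivalent assertion about Hochschild homology of Soergel bimodules is immediate. The hard part --- and the reason this is a corollary of the main computation rather than a short independent lemma --- is exactly this general-$w$ step: I see no elementary argument that the Koszul cohomology above is free, and establishing it seems to genuinely require the structure theory of character sheaves and Lusztig's Fourier transform.
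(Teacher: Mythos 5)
Your overall strategy---reduce to freeness of $\mathbb{H}^*_{B_\Delta}(G,\IC_w)$, pass to $G_\Delta$-equivariant cohomology of $\iIC_w = \ind^{G_\Delta}_{B_\Delta}\IC_w$, and invoke the character-sheaf decomposition and Molien series---is the same route the paper takes, and the preliminary observations (Koszul presentation over $G/B$, failure of localization) are sound but ultimately not used. However, the final step contains two genuine gaps.

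First, the inference ``a Molien series is the Hilbert series of a free graded $R$-module, so each summand is free'' fails on both counts. The summands of $\iIC_w$ decompose $\mathbb{H}^*_{G_\Delta}(\iIC_w) \cong \mathbb{H}^*_{B_\Delta}(\IC_w)$ only as $R^W$-modules, where $R^W = H^*(BG_\Delta)$; the diagonal $R = H^*(BB_\Delta)$-module structure does not split along the character-sheaf decomposition, so it is not even meaningful to ask whether those summands are free over $R$. And the numerical claim is false as stated: take $G = SL_2$ and $\nu$ trivial. Then $M_{\mathrm{triv}}(q,t) = (1+q^{\nicefrac 32}t)/(1-q^2)$, which is the Hilbert series of a free $R^W = \Qlb[x^2]$-module but \emph{not} of any free $\Qlb[x]$-module, since $(1+q^{\nicefrac 32}t)/(1+q)$ is not a polynomial. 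Only the full alternating sum of Molien series governing $\mathbb{H}^*_{B_\Delta}(\IC_w)$ has the shape of a free $R$-module Hilbert series, and equality of Hilbert series would in any case not prove freeness. In short, the character-sheaf calculation hands you freeness over $R^W$, and a separate commutative-algebra step is needed to upgrade this to freeness over $R$. The paper does this via a depth/Auslander--Buchsbaum argument (Bruns--Herzog 1.2.16, smoothness of both $\Spec R$ and $\Spec R^W$): a module that is maximal Cohen--Macaulay over the subring is also maximal Cohen--Macaulay over the regular ring $R$, hence free. That transfer is the crux and is absent from your argument.

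Second, you gesture at ``cleanness of cuspidals'' to handle character sheaves not coming from $T$, but the relevant input is different: it is the block decomposition of $D^b_{G_\Delta}(G)$ by minimal Levi (Proposition \ref{block} in the paper, from Lusztig \cite[7.2]{LusCSII}), which gives $\mathrm{Ext}^\bullet_{G_\Delta}(\cV,\cW)=0$ when $L_\cV$ and $L_\cW$ are non-conjugate, and in particular forces $\hc_{G_\Delta}(\cV)=0$ whenever $L_\cV\neq T$. This vanishing, not cleanness, is what lets you discard the non-principal-series summands of $\iIC_w$ and reduces the computation to the Grothendieck--Springer sheaf $\IndGT$, whose $G_\Delta$-equivariant formality follows because $\hc_{G_\Delta}(\IndGT) \cong \hc_{T_\Delta}(T)$ is free over $R^W$.
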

 This statement was previously proved in type $A$ by Rasmussen
\cite[Proposition 4.6]{Ras06} using an inductive calculation (similar to Jones's
construction of the trace), and conjectured for all types by the authors.
We give an independent, purely geometric proof using Lusztig's work on parabolic induction and restriction functors for character sheaves.\medskip

Just as this trace is intimately tied to the HOMFLYPT polynomial as
described in \cite{Jon87,HOMFLY}, its categorification is connected to
the triply graded link homology of Khovanov and Rozansky
\cite{Kho05,KR05,Ras06}. The main theorem of this paper in the case of
type $A$ is essentially equivalent to the fact that this knot homology
is a knot invariant categorifying the HOMFLYPT polynomial
(proven by Khovanov-Rozansky in \cite{KR05}).

However, we provide a description of
the categorified trace on the Hecke algebra from first principles,
separate from knot theory.  The authors relate this description to
knot theory in a separate paper \cite{WWcol}, which describes a
geometric construction of colored HOMFLYPT homology, a knot homology
theory whose construction had been proposed by Mackaay, \Stosic and Vaz
\cite{MSV}.

\subsection*{Acknowledgments}
\label{sec:acknowledgments}
We would like to thank Jean Michel for explaining Gomi's trace to us and asking if there is a relation with the Hochschild homology of Soergel bimodules. We would also like to thank Roman Bezrukavnikov, Victor Ginzburg, Hailong Dao and
Rapha\"el Rouquier for useful conversations and 
Jim Humphreys for comments on a previous version of this paper.
 BW was
supported by an NSF Postdoctoral Fellowship and NSA  Grant H98230-10-1-0199. GW was supported by an EPSRC Postdoctoral Fellowship.

\subsection{Hecke algebras}
\label{sec:hecke-algebras}

Let $\Gamma$ be a Dynkin diagram.  Attached to this Dynkin diagram, we have:
\begin{itemize}
\item An {\bf Artin braid group} $\Br_{\Gamma}$, which is generated by
  symbols $\si_i$ corresponding to vertices $i$ of $\Gamma$, with the relations
  \begin{equation*}
   \underbrace{\si_i\si_j\cdots}_{m_{ij} \text{ terms}}=\underbrace{\si_j\si_i\cdots}_{m_{ij} \text{ terms}}
  \end{equation*}
  where $m_{ij}$ is the Coxeter matrix:
  \begin{align*}
    m_{ij}=2&\quad \text{ if }\al_i^\vee(\al_j)\cdot\al_j^\vee(\al_i)=0, &
    m_{ij}=3&\quad \text{ if }\al_i^\vee(\al_j)\cdot\al_j^\vee(\al_i)=1,\\
    m_{ij}=4&\quad \text{ if }\al_i^\vee(\al_j)\cdot\al_j^\vee(\al_i)=2,&
    m_{ij}=6&\quad \text{ if }\al_i^\vee(\al_j)\cdot\al_j^\vee(\al_i)=3.
  \end{align*}
\item A {\bf Coxeter group} $W_{\Gamma}$ obtained as the quotient of
  $\Br_{\Gamma}$ by the relation $\si_i^2=1$, which is the Weyl group
  of the associated complex semi-simple Lie algebra.  We use $s_i$ to
  denote the image of $\si_i$ in $W_{\Gamma}$.
\item A {\bf Hecke algebra} $\Hec_{\Gamma}$, which is the quotient of the
  group algebra of the braid group $\Br_{\Gamma}$ over
  $\Z[q^{\nicefrac{1}{2}},q^{-\nicefrac{1}{2}}]$ by the relations
\begin{equation*}
  (\si_i-q^{\nicefrac{1}{2}})(\si_i+q^{- \nicefrac{1}{2}})=0.
\end{equation*}
This is a ``deformation'' of the relation $\si_i^2=1$, and the resulting
algebra is a flat deformation of the group algebra of $W_{\Gamma}$.
In fact, there is a
``standard'' basis $\si_w$ of $\Hec_{\Gamma}$ where the basis elements
are labeled by $w\in W_{\Gamma}$, which limits to the standard basis on
the group algebra.
\end{itemize}

Throughout we allow the empty Dynkin diagram $\emptyset$. By
convention $\Br_{\Gamma} = W_{\Gamma}$ is the trivial group and 
$\Hec(\emptyset)=\Z[q^{\nicefrac{1}{2}},q^{-\nicefrac{1}{2}}]$.

Given an inclusion of Dynkin diagrams $\Gamma'\to
\Gamma$, then there is an induced map for each of the algebraic
objects listed above.  In particular, for each infinite series of
classical groups $X_n$,
where $X=A,B,C,D$, we
have a tower of natural inclusions $\iota_n:\Hec(X_n)\to
\Hec(X_{n+1})$ for $n \ge 0$.  In all types other than $A$, this inclusion is
unambiguous.  In type $A_n$, we label all vertices with the integers
$1,\cdots, n$ from left to right, and take the inclusion
which preserves the labeling of generators. 

We first consider the case of the $A_n$ Dynkin diagrams.  We set
$\Hec_n = \Hec(A_n)$.
By work of Ocneanu and Jones \cite{Jon87}, it follows that
  the algebras $\Hec_n$ carry a unique system of traces\footnote{
Recall that if $B$ is an $A$-algebra (for a $A$ a commutative ring), and $M$ an $A$-module, then a
{\bf trace} on $B$ with values in $M$ is a map $\phi:B\to M$ of $A$-modules
such that $\phi(b_1b_2)=\phi(b_2b_1)$ for all $b_1, b_2 \in B$.}
$\Tr_n$ with values in $\Z((q^{\nicefrac{1}{2}}))[t]$
  normalized by the conditions:
  \begin{align*}
    \Tr_0(1) & = 1& \Tr_n(\si_{n}\iota_{n-1}(a))&= -t \cdot \Tr_{n-1}(a)\\ 
    \Tr_{n}(\iota_{n-1}(a))&=\frac{1+q^{\nicefrac 12}t}{1-q}\cdot
    \Tr_{n-1}(a)&\Tr_n(\si_{n}^{-1}\iota_{n-1}(a))&= q^{\nicefrac
      12} \cdot \Tr_{n-1}(a).
  \end{align*}
It is this trace and its generalizations which interest us.\medskip

The generalization of this theorem is due to Geck and Lambropoulou
\cite{GL}, but one must give more information to obtain a uniqueness
statement.

We use the labelings of the $B_n$ and $D_n$ Dynkin diagrams shown in Figure \ref{BandD}.
\begin{figure}
\begin{tikzpicture}
\node at (-4,.2) {\begin{tikzpicture}
\node (a) [circle,draw, inner sep=3pt,label=above:{$1$}] at (0,0){};
\node (b) [circle,draw, inner sep=3pt,label=above:{$2$}] at (1,0){};
\node (c) [circle,draw, inner sep=3pt,label=above:{$3$}] at (3,0){};
\draw (a.20) -- (b.160);
\draw (a.-20) -- (b.-160);
\draw (b.0) -- +(.4,0);
\draw (c.180) -- +(-.4,0);
\node at (2,0) {$\cdots$};
\draw (.4,.2) -- (.6,0) -- (.4,-.2);
\end{tikzpicture}};
\node at (4,0) {\begin{tikzpicture}
\node (aa) [circle,draw, inner sep=3pt,label=left:{$1$}] at (0,.5){};
\node (ab)  [circle,draw, inner sep=3pt,label=left:{$2$}] at (0,-.5){};
\node (b) [circle,draw, inner sep=3pt,label=above:{$3$}] at (1,0){};
\node (c) [circle,draw, inner sep=3pt,label=above:{$n$}] at (3,0){};
\draw (aa) -- (b);  \draw (ab) -- (b);
\draw (b.0) -- +(.4,0);
\draw (c.180) -- +(-.4,0);
\node at (2,0) {$\cdots$};
\end{tikzpicture}};
\end{tikzpicture}
\caption{The type $B$ and $D$ Dynkin diagrams}
\label{BandD}
\end{figure}
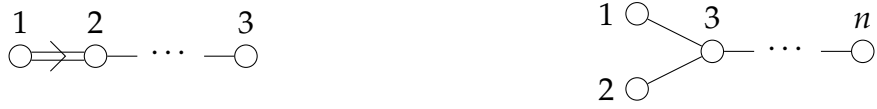
We let \begin{align*}
T_{n-1}&=\si_{n}\si_{n-1}\cdots \si_2 \si_1 \si_2^{-1}\cdots \si_{n}^{-1}\in \Hec(B_n)\\
U_{2n-1}&=\si_{2n}\cdots \si_3\si_1\si_2\si_3^{-1}\cdots \si_{2n}^{-1}\in \Hec(D_{2n}).
\end{align*}
\begin{defi} 
\label{markov-B}
  For each scalar $y \in \Z[q^{\pm \nicefrac 12} ,t]$, let $\Tr_n^y$ be the unique system of traces on the algebras $\Hec(X_n)$
  where $X_n=A_n,B_n,C_n$ or $D_n$ such that
  \begin{align*}
    \Tr^y_0(1) & = 1&
    \Tr_n^y(\si_{n}\iota_{n-1}(a))&=-t \cdot  \Tr_{n-1}^y(a)\\ 
    \Tr_{n}^y(\iota_{n-1}(a))&=\frac{1+q^{\nicefrac{1}{2}}t}{1-q}\cdot
    \Tr_{n-1}^y(a)&
    \Tr_n^y(\si_{n}^{-1}\iota_{n-1}(a))&=q^{\nicefrac 12} \cdot \Tr_{n-1}^y(a)\\
  \Tr_{n}^y(T_{n-1}\iota_{n-1}(a))& = y\cdot \Tr_{n-1}^y(a)&&\text{if } X=B,C\\
     \Tr_{2n}^y(U_{2n-1}\iota_{2n-1}(a))& = y\cdot \Tr_{2n-1}^y(a)&&\text{if } X=D
  \end{align*}
  (recall that by convention $\Hec_0=\Z[q^{\nicefrac{1}{2}},q^{-\nicefrac{1}{2}}]$.)
\end{defi}
The existence and uniqueness of such a trace was proved by Geck and Lambropoulou \cite{Geck,GL}.
\begin{remark}
  This is a stronger notion of a Markov trace than used by
  Geck-Lambropoulou, and stronger than necessary for topological
  purposes, but our trace will be of this form.
\end{remark}

\subsection{Hecke algebras and geometry}
\label{sec:hecke-algebr-geom}

Let $\FF_q$ denote a finite field with $q$ elements
and let $G$ denote a split semi-simple algebraic group over $\FF_q$
with fixed split maximal torus $T$ and Borel subgroup $B$. If $\Gamma$
denotes the Dynkin diagram of the root system of $G$, then we may
canonically identify $W_{\Gamma}$ with the Weyl group of $G$. Given any
$w \in W_{\Gamma}$ we denote by $\dot{w}$ a representative in $G$ for
$w \in W_{\Gamma}$. 
Consider the corresponding Bruhat decomposition
\[ G(\FF_q) = \bigsqcup_{w \in W_{\Gamma}} B(\FF_q) \cdot \dot{w} \cdot B(\FF_q). \] A classical
theorem of Iwahori identifies the convolution algebra of $B(\FF_q)
\times B(\FF_q)$-invariant $\C$-valued functions on $G(\FF_q)$ with $\Hec_{\Gamma}$, where
$q^{\nicefrac{1}{2}}$ is specialized at a fixed square root of $|\FF_q
| \in \C$. Under this identification the element $\si_i \in H_{\Gamma}$
corresponds to the characteristic function of the orbit $B(\FF_q)
\cdot \dot{s_i} \cdot B(\FF_q)$ multiplied by 
$q^{- \nicefrac{1}{2}}$.

The fact that the Hecke algebra arises as a convolution algebra
suggests a natural categorification using Grothendieck's
function-sheaf dictionary. Fix a prime $\ell$ different from the
characteristic of $\FF_q$ and let $\Qlb$ denote an algebraic
closure of the field of $\ell$-adic numbers.

We denote by $D^b_{B \times B}(G)$ the bounded $B \! \times \!B$-equivariant
derived category of mixed constructible $\Qlb$-sheaves
on $G$ (see, for example \cite{SGA4}, \cite{SGA4h}, \cite{BBD} and \cite{BL}), often referred to as the {\bf Hecke category}. There is a convolution product on $D^b_{B \times B}(G)$ which we denote by $\star$ (see \cite{SpIH}). We fix a square root
$q^{\nicefrac 12}$ of $q$ in $\Qlb$, which allows us to define a
square root $(\nicefrac 12)$ of the Tate twist $(1)$. 
We denote by $\langle m \rangle$ the shift-twist functor $[m](m/2)$ (note that $\langle m \rangle$ preserves weight).

\begin{defi} \label{def:ICnorm}
  Let $\IC_w$ denote the intersection cohomology complex corresponding
  to the $B \times B$ orbit $B\dot{w}B$, normalized so that the restriction
  of $\IC_w$ to $B\dot{w}B$ is $\underline{\Qlb}_{B\dot{w}B} \langle \ell(w)/2
  \rangle$.
\end{defi}

That is, we normalize intersection cohomology complexes so
  that the induced sheaf on $G/B$ is perverse and pure of weight 0.

Finally, let $D_{B \times B}(G)_0$ denote the subcategory of $D^b_{B
  \times B}(G)$ consisting of objects isomorphic to direct sums of
$\IC_w\langle m \rangle$ for $w \in W_{\Gamma}$ and $m \in \Z$ and
$\mathscr{K}_{B \times B}(G)_0$ its split Grothendieck group. We have:
\begin{itemize}
 \item The category $D_{B \times B}(G)_0$ is preserved under convolution (by the
   Decomposition theorem), and $\star$
  induces a ring structure on $\mathscr{K}_{B \times B}(G)_0$.
\item  The
  assignment $q^{1/2} \mapsto \langle {-1} \rangle $ makes
  $\mathscr{K}_{B \times B}(G)_0$ into an algebra over
  $\Z[q^{1/2},q^{-1/2}]$.
\end{itemize}
The following result is well-known:

\begin{thm}[See \cite{SpIH}]
\label{thm-heckeconv}
The Hecke algebra $\Hec_{\Gamma}$ can be identified with
$\mathscr{K}_{B \times B}(G)_0$. Under this identification the
Kazhdan-Lusztig basis element $C_w^{\prime}$ corresponds to the class of the
intersection cohomology complex $[ \IC_w ]$.
\end{thm}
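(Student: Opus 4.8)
\emph{Proof idea.} The statement is classical; here is the route we would take. The mechanism is Grothendieck's sheaf--function dictionary. Taking the alternating sum of traces of geometric Frobenius on stalks, carried out compatibly over all finite extensions of $\FF_q$ (equivalently, reading off the weight filtration directly), sends an object of $D_{B\times B}(G)_0$ to a $B(\FF_q)\times B(\FF_q)$-invariant function --- an operation additive on $\oplus$ --- and after the substitution $q^{1/2}\mapsto\langle -1\rangle$ this descends to a $\Z[q^{1/2},q^{-1/2}]$-linear map $\Phi\colon\mathscr{K}_{B\times B}(G)_0\to\Hec_\Gamma$ into Iwahori's convolution algebra. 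That $\Phi$ carries $\star$ to the convolution of functions --- hence is a $\Z[q^{\pm1/2}]$-algebra homomorphism --- is the usual consequence of proper base change and the Grothendieck--Lefschetz trace formula. (One may also build $\Phi$ purely algebraically, sending the standard generator $\si_{s_i}$ to the class of the standard object $j_{s_i!}\Cs{B\dot{s_i}B}\langle\ell(s_i)\rangle$ and verifying the braid relations from the isomorphism of iterated convolutions of standards along reduced words and the quadratic relation from the $\P^1$-bundle computation below; this avoids any appeal to purity.)

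To see $\Phi$ is bijective, note first that the classes $[\IC_{s_i}]$ generate $\mathscr{K}_{B\times B}(G)_0$ as a $\Z[q^{\pm1/2}]$-algebra: by induction on $\ell(w)$, using that the Decomposition theorem and Deligne's purity give $[\IC_{s_i}\star\IC_w]=[\IC_{ws_i}]$ plus a $\Z[q^{\pm1/2}]$-combination of classes $[\IC_v]$ with $v<ws_i$. Since the variety $\overline{B\dot{s_i}B}$ is smooth, $\IC_{s_i}$ is a shifted constant sheaf and a one-line computation gives $\Phi([\IC_{s_i}])=C'_{s_i}$; as the $C'_{s_i}$ generate $\Hec_\Gamma$, $\Phi$ is surjective. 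Both $\mathscr{K}_{B\times B}(G)_0$, with basis $\{[\IC_w]\}_{w\in W_\Gamma}$, and $\Hec_\Gamma$, with standard basis $\{\si_w\}$, are free $\Z[q^{\pm1/2}]$-modules of rank $|W_\Gamma|$, so $\Phi$ is an isomorphism. The only nontrivial local input is the quadratic relation, the elementary computation on the $\P^1$-bundle $\overline{B\dot{s_i}B}$ yielding $\IC_{s_i}\star\IC_{s_i}\cong\IC_{s_i}\langle 1\rangle\oplus\IC_{s_i}\langle -1\rangle$.

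Finally we identify $\Phi[\IC_w]$ with $C'_w$. Under $\Phi$ the class of the standard object $\Delta_v=j_{v!}\Cs{B\dot{v}B}\langle\ell(v)\rangle$ goes to $\si_v$, and the stratification of $\overline{B\dot{w}B}$ by the orbits $B\dot{v}B$ gives $[\IC_w]=\sum_{v\le w}h_{v,w}\,[\Delta_v]$ with $h_{w,w}=1$ and, by the support condition defining $\IC_w$, each $h_{v,w}$ ($v<w$) lying in $q^{-1/2}\Z[q^{-1/2}]$; moreover $\D\IC_w\cong\IC_w$ by the normalization of Definition~\ref{def:ICnorm}, and Verdier duality induces the bar involution of $\Hec_\Gamma$, so $\Phi[\IC_w]$ is bar-invariant. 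Self-duality together with such a unitriangular standard-basis expansion is exactly Kazhdan--Lusztig's characterization of $C'_w$, forcing $\Phi[\IC_w]=C'_w$; invoking in addition the pointwise purity of the $\IC_w$ identifies $h_{v,w}$ with the Kazhdan--Lusztig polynomial $P_{v,w}$, recovering the classical formula. The main obstacle is not any individual step but keeping the normalizations mutually consistent --- the half-Tate-twist shift of Definition~\ref{def:ICnorm}, the choice of square root of the Tate sheaf that makes the shift signs cancel so that one lands on $C'_w$ rather than $\pm C'_w$, and, if $\Phi$ is built from Frobenius traces, the injectivity of $\Phi$ on $D_{B\times B}(G)_0$, which again rests on the pointwise purity of the $\IC_w$. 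With these conventions fixed compatibly, each remaining verification is standard.
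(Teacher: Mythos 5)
The paper does not prove this statement: it is quoted as well-known and cited to Springer's article \cite{SpIH}, so there is no internal proof to compare against. Your sketch reconstructs the standard argument faithfully and is essentially correct; it identifies the three genuine pieces of work (the function--sheaf dictionary respects convolution via proper base change and the Grothendieck--Lefschetz formula; the $[\IC_{s_i}]$ generate $\mathscr{K}_{B\times B}(G)_0$ by an induction through the Decomposition theorem; self-duality of $\IC_w$ plus the support/cosupport bound pins down $\Phi[\IC_w]=C'_w$ via the Kazhdan--Lusztig uniqueness characterization, without needing pointwise purity), and you are right that the main hazard is normalization bookkeeping.

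One small point worth tightening: you invoke the classes $[\Delta_v]$ of the standard objects $j_{v!}\Cs{B\dot vB}\langle\ell(v)\rangle$ inside $\mathscr{K}_{B\times B}(G)_0$, but for $v\neq e$ these objects do not lie in $D_{B\times B}(G)_0$ (which by definition contains only direct sums of shift-twists of IC sheaves), so a priori they have no class in the \emph{split} Grothendieck group. The fix is routine: either (a) define $\Phi$ on the triangulated Grothendieck group $K_0(D^b_{B\times B,\mathrm{mix}}(G))$ by the alternating sum of Frobenius traces and observe that the natural map $\mathscr{K}_{B\times B}(G)_0 \to K_0(D^b_{B\times B,\mathrm{mix}}(G))$ is an isomorphism (both are free of rank $|W|$ over $\Z[q^{\pm 1/2}]$ once one trivializes weights, and $\{[\IC_w]\}$, $\{[\Delta_v]\}$ are related by a unitriangular matrix there), after which the $\Delta_v$ computation lives in the triangulated $K_0$; or (b) bypass the $\Delta_v$ entirely and directly read off $\Phi[\IC_w]$ from the Frobenius traces on stalks along the Bruhat stratification, which is Springer's original route. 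Either way the gap is cosmetic, not conceptual; but as written the penultimate paragraph tacitly conflates the split and triangulated Grothendieck groups, and that deserves a sentence of acknowledgment. (Also a very minor typographical remark: the normalization in Definition~\ref{def:ICnorm} should read $\underline{\Qlb}_{B\dot wB}\langle\ell(w)\rangle$ for the induced sheaf on $G/B$ to be perverse of weight $0$ as the surrounding text asserts; this does not affect your argument.)
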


For example, if $s$ is a simple reflection, then $C_s^{\prime} =
\sigma_s + q^{-\nicefrac 12}$ corresponds to
$\underline{\Qlb}_{P_s}\langle 1 \rangle$, the shift-twist of the
constant sheaf on the minimal parabolic subgroup $P_s = \overline{BsB}$.

The relationship between these sheaves and the Hecke algebra was an
important ingredient in the Kazhdan-Lusztig conjecture
\cite{KL79,KL80}.

Alternatively, one can consider the $B \! \times \! B$-equivariant cohomology of
$\IC_w$.  This is an indecomposable {\bf Soergel bimodule} 
$\hc_{\BB}(G;\IC_w)=S_w$ (for
more background on Soergel bimodules and their connections to
geometry, see the original paper
\cite{Soe92} of Soergel or authors' earlier paper \cite{WW}).

\subsection{A categorified trace}
\label{sec:categorified-trace}

Let $\BD\subset\BB$ be the diagonal subgroup.  Then we can associate
to $\IC_w$ and $S_w$ bigraded vector spaces $\hc_{\BD}\!(\IC_w)$ and
$\HH^*(S_w)$, the $\BD$-equivariant cohomology and Hochschild
homology respectively.

The bigrading on $\HH^*(S_a)$ is easy to describe: one grading, which
we call the {\bf $t$-grading}, is given by the homological grading on
$\HH^*$ and one is given by the
fact that $S_a$ is a graded bimodule over polynomials, with linear
polynomials in degree 2.  For reasons of knot theory,  we follow the convention of \cite{MSV} with respect to the grading on Hochschild homology: we normalize the gradings so that the differential on Hochschild homology has bidegree $(1,1)$.

The bigrading on $\hc_{\BD}\!(\IC_w)$ is of geometric origin, but less
obvious than the algebraic grading.  What we must use is the weight
grading on cohomology.  
By previous work of the authors \cite[Theorem
1.4]{WW}, there is an isomorphism $\hc_{\BD}\!(\IC_w)\cong \HH^*(S_w)$,
which sends the $q$-grading to the usual cohomological grading and the
$t$-grading to the weight plus cohomological grading, i.e.\ that given
by the norm of the eigenvalue of Frobenius.

Thus, we let
$\mathbb{H}_{\BD}^{i;j}(G;\IC_w)$ be subspace of
$\mathbb{H}_{\BD}^{i}(G;\IC_w)$ of weight $j$. In all situations
encountered below, Frobenius will always act semi-simply via powers of our fixed $q^{\nicefrac 12}$ and hence $\mathbb{H}_{\BD}^{i;j}(G; \IC_w)$ is equal to the $q^{\nicefrac{j}{2}}$-eigenspace of $\mathbb{H}_{\BD}^{i}(G; \IC_w)$ for the action of Frobenius.


\begin{defi} \label{defi-mPs}
Given a sheaf $\F \in D^b_{\BD}(G)$, its
  {\bf mixed Poincar\'e series} is
  \begin{equation*}
    P_{\F}(q,t)=\dim_{q,t}\hc_{\BD}\!(G;\F)=\sum_{i,j}\dim\mathbb{H}_{\BD}^{i;j}(G;\F)q^{\nicefrac{i}{2}}t^{j-i}.
  \end{equation*}
\end{defi}

We will always consider equivariant mixed Poincar\'e polynomials, and only specify the group if there is danger of confusion. Note that
\begin{equation} \label{shifttwist}
P_{\F\langle -i \rangle }(q,t) = q^{\nicefrac i2} P_{\F}(q,t).
\end{equation}

%

As before, $\Gamma$ is a Dynkin diagram, and we let
$\Gamma'=\Gamma-\{v\}$ for a single vertex $v$, and
$\iota:\Hec_{\Gamma'}\to \Hec_{\Gamma}$ is the induced inclusion.

\begin{thm}\label{main-theorem}
  The linear extension of the function
  $\Tr_\Gamma(C_w^{\prime})=\dim_{q,t}\hc_{\BD}\!(\IC_w) =
  P_{\IC_w}(q,t)$ only depends on the root system of $G$ and is a trace on
  $\Hec_{\Gamma}$. Furthermore, for each vertex $v$, we have
\begin{align*}
\label{norm1}  \Tr_{\emptyset}(1)&=1 &
\Tr_{\Gamma}(\si_{v}\iota(a))&= -t \cdot
\Tr_{\Gamma'}(a)\\ \Tr_{\Gamma}(\iota(a))&=\frac{1+ q^{\nicefrac 12}t}{1-q}\cdot
\Tr_{\Gamma'}(a) &
\Tr_{\Gamma}(\si_{v}^{-1}\iota(a))&=q^{- \nicefrac 12}\cdot \Tr_{\Gamma'}(a)
\end{align*}
In particular, when $\Gamma=A_n$, this trace coincides with that of
Jones-Ocneanu.  Furthermore, in the case where $\Gamma=B_n,C_n,D_n$,
it is the trace $\Tr_\Gamma^{-t}$ given in
Proposition \ref{markov-B}, where $y=-t$.
\end{thm}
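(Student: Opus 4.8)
The plan is to establish the four normalization identities geometrically, and then deduce that $\Tr_\Gamma$ is a trace and that it matches the known characterizations. First I would observe that $\Tr_{\emptyset}(1) = 1$ is immediate: for the empty diagram $G = T$ is a torus, $\IC_e = \underline{\Qlb}_T$ is the constant sheaf, and $\hc_{\BD}(T) = \hc_T(T) = H^*_T(\mathrm{pt})$; however, the bigrading convention (weight plus cohomological degree for the $t$-grading) together with the normalization of the mixed Poincar\'e series in Definition \ref{defi-mPs} is arranged precisely so that this equivariant cohomology of a point contributes $1$. More care is needed here since $H^*_T(\mathrm{pt})$ is a polynomial ring, not one-dimensional; I expect that one actually works with $\hc_{\BD}$ for $G$ semisimple so that the relevant statement is about the normalized series of the skyscraper-like $\IC_e$ on the point stratum, and the factor $t^{j-i}$ in the definition kills the polynomial generators because they sit in bidegree $(i,j)$ with $j = i$. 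I would spell this out carefully, as getting the empty-diagram base case and the precise bookkeeping of the two gradings right is the subtle part of the whole argument.

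Next I would prove the first column of identities. For $\Tr_{\Gamma}(\iota(a))$ I would use that the inclusion $\iota$ of Hecke algebras is realized geometrically by parabolic induction: if $P = P_v$ is the minimal parabolic and $L$ its Levi (with Dynkin diagram $\Gamma'$), then an $\IC$ sheaf on the group $L$, pulled back and pushed forward appropriately to $G$, relates $\IC_w$ for $w \in W_{\Gamma'}$ to the corresponding sheaf on $L$; the $\BD$-equivariant cohomology then picks up an extra factor from the fibers of $B\backslash P \cong \mathbb{P}^1$, namely the $\BD$-equivariant cohomology of $\mathbb{P}^1$ with its conjugation-type action, which computes to $\frac{1 + q^{1/2}t}{1-q}$ after applying the grading conventions (the denominator $1-q$ being the contribution of the extra torus/unipotent direction, the numerator $1 + q^{1/2}t$ the cohomology of $\mathbb{P}^1$). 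This is the heart of the Markov property and I expect it to be the main obstacle: one must set up the induction functor at the sheaf level, check it intertwines the relevant $\IC$ objects with the correct shifts, and then carry out the equivariant cohomology computation tracking both gradings and the Frobenius weights simultaneously. The identity $\Tr_{\Gamma}(\si_v \iota(a)) = -t\,\Tr_{\Gamma'}(a)$ would follow from a similar but simpler analysis: $\si_v = C'_{s_v} - q^{-1/2} = \underline{\Qlb}_{P_v}\langle 1\rangle - q^{-1/2}[\IC_e]$ in the Grothendieck group, so convolving with $\si_v$ and computing equivariant cohomology reduces, via the preceding parabolic-induction computation, to the difference $q^{1/2}\cdot\frac{1+q^{1/2}t}{1-q} - q^{-1/2}$ times $\Tr_{\Gamma'}(a)$ up to normalization, which I would verify simplifies to $-t$; and the $\si_v^{-1}$ identity follows by an analogous manipulation using $\si_v^{-1} = C'_{s_v}\cdot(\text{scalar}) - \dots$ or directly by inverting in the Hecke algebra, giving the factor $q^{-1/2}$.

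Once these four identities are in hand, I would argue that $\Tr_\Gamma$ is genuinely a trace (i.e.\ $\Tr_\Gamma(ab) = \Tr_\Gamma(ba)$): this is the assertion of Theorem \ref{main-theorem}, which I am entitled to assume, but to identify our trace with Jones--Ocneanu and with $\Tr_\Gamma^{-t}$ I only need the normalizations. Indeed, for $\Gamma = A_n$, the Jones--Ocneanu trace is characterized by exactly the four conditions listed for $\Tr_n$ (note the $\si_n^{-1}$ condition: Jones--Ocneanu has $q^{1/2}$ where we derived $q^{-1/2}$ — here I would double-check the normalization conventions, as the discrepancy is presumably absorbed by a global rescaling of the trace or a choice of the square root, and resolving this sign/power convention cleanly is a point to be careful about), so by the uniqueness statement of Ocneanu--Jones our $\Tr_{A_n}$ equals theirs. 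For $\Gamma = B_n, C_n, D_n$, I would invoke the Geck--Lambropoulou uniqueness in Definition \ref{markov-B}: our trace satisfies the first four conditions (restricted to the $A$-type subdiagram), so it remains only to compute $\Tr_{\Gamma}(T_{n-1}\iota_{n-1}(a))$ and $\Tr_{D_{2n}}(U_{2n-1}\iota_{2n-1}(a))$ and check these equal $-t\cdot\Tr_{\Gamma'}(a)$, i.e.\ that $y = -t$. This last computation I would do by expressing $T_{n-1}$ (resp.\ $U_{2n-1}$) in terms that allow reduction to the already-established identities — $T_{n-1}$ is a conjugate of $\si_1$ by a product of $\si_i^{\pm1}$, so using the trace property and the established normalizations one reduces $\Tr_\Gamma(T_{n-1}\iota(a))$ to $\Tr_\Gamma(\si_v\iota(a')) = -t\,\Tr_{\Gamma'}(a')$ for a suitable $a'$, yielding $y = -t$ as claimed. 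The main obstacle throughout remains the parabolic-induction computation of $\frac{1+q^{1/2}t}{1-q}$; everything else is bookkeeping with the grading conventions and appeals to the cited uniqueness theorems.
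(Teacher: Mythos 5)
Your proposal takes the right overall shape for the normalization identities but has several genuine gaps when compared with what the theorem actually requires.

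\textbf{The trace property is not established.} You say you will ``argue that $\Tr_\Gamma$ is genuinely a trace'' but then immediately write that this ``is the assertion of Theorem \ref{main-theorem}, which I am entitled to assume.'' You cannot assume what you are proving, and the logic of your final step depends crucially on it: the Ocneanu--Jones and Geck--Lambropoulou uniqueness theorems characterize \emph{traces} satisfying the Markov relations, not arbitrary linear functionals, so without first knowing $\Tr_\Gamma(ab)=\Tr_\Gamma(ba)$ you cannot invoke uniqueness at all. The paper's argument for the trace property is short but essential: for $\F, \F' \in D^b_{\BB}(G)$ one has canonical isomorphisms $\hc_{\BD}(G;\F\star\F')\cong\hc_{\BB}(G\times G;\F\boxtimes\F')\cong\hc_{\BD}(G;\F'\star\F)$, where the middle term uses the twisted $B\times B$-action $(b_1,b_2)\cdot(g_1,g_2)=(b_1g_1b_2^{-1},b_2g_2b_1^{-1})$. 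You should supply this (or an equivalent argument) before any appeal to uniqueness.

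\textbf{The reduction of the $T_{n-1}$ (and $U_{2n-1}$) condition to the $A$-type Markov relations is false.} You claim that since $T_{n-1}=c\sigma_1 c^{-1}$ with $c=\sigma_n\cdots\sigma_2$, the trace property reduces $\Tr(T_{n-1}\iota(a))$ to an instance of $\Tr(\sigma_v\iota(a'))$. This cannot work: cyclicity gives $\Tr(T_{n-1}\iota(a))=\Tr(\sigma_1 c^{-1}\iota(a)c)$, but the element $\sigma_1 c^{-1}\iota(a)c$ involves $\sigma_n$ in a way that does not put it in the form $\sigma_n\iota(a')$ for any $a'\in\Hec(B_{n-1})$, and $\sigma_1$ is not the ``new'' generator for the inclusion $\Hec(B_{n-1})\hookrightarrow\Hec(B_n)$. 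In fact, if such a reduction existed, then the parameter $y$ in Definition \ref{markov-B} would be forced by the first four relations, contradicting the Geck--Lambropoulou classification (which exhibits an infinite-dimensional space of Markov traces with $y$ free). The paper handles this condition not geometrically but by citing Gomi's case-by-case work: it shows the geometric trace agrees with Gomi's via Corollary~\ref{final}, and then uses Gomi's own verification that his trace matches the Geck--Lambropoulou trace with $y=-t$. The paper even explicitly poses the open Question of whether the extra conditions in types $B,C,D$ have a geometric explanation; your proposal would, if correct, answer it, but the argument sketched does not hold.

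Two smaller points. First, you do not address the claim that the trace depends only on the root system (not the root datum); the paper proves this by reducing to the adjoint group via a central isogeny. Second, your handling of the empty diagram is confused: for $\Gamma=\emptyset$ the group $G$ is trivial (not a torus), so $\hc_{\BD}(G)=\Qlb$ is one-dimensional and the normalization is immediate; your claimed cancellation ``because they sit in bidegree $(i,j)$ with $j=i$'' does not make $q^{i/2}t^{j-i}$ vanish. You are right to flag the $q^{1/2}$ vs.\ $q^{-1/2}$ discrepancy in the $\sigma^{-1}$ relation between the stated Jones--Ocneanu normalization and the theorem — that is indeed a convention inconsistency worth resolving — but note this relation is actually redundant given the quadratic relation and the $\iota(a)$, $\sigma_v\iota(a)$ conditions.
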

This establishes that this trace pulled back to the braid group provides a knot invariant (up to normalization), called the HOMFLYPT
polynomial in the type $A$ case, and an invariant of knots in the
solid torus in type $B$, by the ``cylindrical Markov theorem'' (see \cite{GL}). 

\begin{proof}[Proof of Theorem \ref{main-theorem}]
We first show that our trace only depends on the root system of
the semi-simple group $G$, and not on its root datum. To this end let
\[
\phi : \widetilde{G} \to G
\]
denote a central isogeny and set $\widetilde{B} := \phi^{-1}(B)$. We
denote by $\widetilde{\IC}_w$ the intersection cohomology
complex on $\widetilde{G}$, normalized as in Definition
\ref{def:ICnorm}. We claim there is a canonical isomorphism:
\[
\hc_{{B}_{\Delta}}(G, \IC_w) \cong \hc_{\widetilde{B}_{\Delta}}(\widetilde{G}, \widetilde{\IC}_w).
\]

First note that $(\phi^*, \phi_*)$ give an equivalence
\[
D^b_{\widetilde{B} \times \widetilde{B}}(\widetilde{G})
\stackrel{\sim}{\to} D^b_{\widetilde{B} \times B}(G)
\]
and hence the
adjunction morphism $\F \to \phi_* \phi^* \F$ is an isomorphism for any $\F \in D^b_{\widetilde{B} \times B}(G)$. It
follows that
\[
\hc_{\widetilde{B}_{\Delta}}(G, \IC_w) \stackrel{\sim}{\to}
\hc_{\widetilde{B}_{\Delta}}(\widetilde{G}, \phi^*\IC_w)
= \hc_{\widetilde{B}_{\Delta}}(\widetilde{G}, \widetilde{\IC}_w)
\]
Lastly $\hc_{\widetilde{B}_{\Delta}}(G, \IC_w) = \hc_{{B}_{\Delta}}(G,
\IC_w)$ because $K = \ker \phi$ is a finite group and our coefficients are of
characteristic zero.

Our definition yields a trace since for sheaves $\F, \F^{\prime} \in D^b_{\BB}(G)$ we have
  \begin{equation*}
    \hc_{\BD}\!(G;\F\star\F')\cong \hc_{B\times B}(G\times G;\F\boxtimes\F')\cong \hc_{\BD}\!(G;\F'\star\F).
  \end{equation*}
where the $B \times B$-action on $G \times G$ in the equation above is given by
\[
(b_1, b_2) \cdot (g_1, g_2) = (b_1g_1b_2^{-1}, b_2g_2b_1^{-1}).
\]
It follows that $\Tr_{\emptyset}(1)=1$ since if $\Gamma = \emptyset$ then $G$ is the trivial group.

It remains to verify the behaviour of our trace under an inclusion
$\iota$. To do this we give a geometric interpretation of $\iota$. 
By the above considerations we may assume that $G$ is of
adjoint type. Now let $\Delta \subset R$
denote the based root system of $(G,B,T)$.  The inclusion $\Gamma
\subset \Gamma'$ determines a based root system $\Delta' \subset R'$ with an inclusion into  $\Delta \subset R$
such that $\Delta = \Delta' \cup \{ \alpha \}$ where $\alpha$ is the
simple root corresponding to the new vertex $v$ of $\Gamma$.

Now let $G'$ denote the closed subgroup of $G$ generated by root
subgroups and cocharacters corresponding to roots in $R'$, and let
$\widetilde{G'} = G'T$. Then $G'$ is a semi-simple algebraic group
with root system $R'$ and $\widetilde{G'} \cong G' \times \Gm$,
because we have assumed that $G$ is adjoint.
If we set \[
T' := T
\cap G', \qquad \widetilde{T'} := T, \qquad B' := B \cap G'\qquad  \text{and}\qquad \widetilde{B'}
:= B \cap \widetilde{G'}
\] then $T'$ and $\widetilde{T'}$ (respectively
$B'$ and $\widetilde{B'}$) are split maximal tori (respectively Borel
subgroups) of $G'$ and $\widetilde{G'}$.

  The projection and inclusion maps
\[
 G' \stackrel{p}{\leftarrow} \widetilde{G'}
 \stackrel{i}{\hookrightarrow} G
\]
induce functors $$p^* \colon D^b_{B' \times B'}(G) \to D^b_{\widetilde{B'} \times
  \widetilde{B'}}(G')\qquad\text{ and }\qquad i_* \colon D^b_{\widetilde{B'} \times
  \widetilde{B'}}(G') \to D^b_{\widetilde{B'} \times
  \widetilde{B'}}(G).$$ We define
\begin{eqnarray*}
 \gio_* : D^b_{B' \times B'} (G') \longrightarrow & D^b_{B \times B} (G) \\
\F \longmapsto & \ind^{B \times B}_{\widetilde{B'} \times
  \widetilde{B'}}i_* p^* \F.
\end{eqnarray*}
If $w' \in W_{\Gamma'}$ and $w$ denotes its image under the natural inclusion
$W_{\Gamma'} \hookrightarrow
W_{\Gamma}$ then one has $\gio_*( \IC_{w'}) \cong \IC_w$ because
$B/\widetilde{B'}$ is isomorphic to an affine space. 
Hence $\gio_*$ categorifies the inclusion $\iota : \Hec_{\Gamma'} \to
\Hec_{\Gamma'}$ under the identifications in Theorem
\ref{thm-heckeconv}.

Now, let $P\subset G$ be the minimal standard parabolic corresponding
to $\alpha \in \Delta$. Given $\F \in D^b_{B' \times B}(G')_0$  it remains to calculate the Poincar\'e series of the cohomology groups 
\[
\hc_{B_{\Delta}}(\Cs{B}\star \gio_* \F) \cong \hc_{B_{\Delta}}(\gio_* \F)\qquad 
\text{ and }\qquad 
\hc_{B_{\Delta}}(\Cs{P}\star \gio_* \F)
\]
in terms of $\hc_{B'_{\Delta}}( \F)$. It will be easier to work instead with
$T_{\Delta}$-equivariant cohomology, which is equivalent because
$B_{\Delta}/ T_{\Delta}$ is isomorphic to an affine space.

First note that, as $\widetilde{G'} \cong G' \times \Gm$ we have, by
the K\"unneth formula,
\begin{equation}\label{eq:Gm}
\hc_{T_{\Delta}}(\gio_* \F) \cong \hc_{T'_{\Delta}} (\F) \otimes \hc_{\Gm}(\Gm)
\end{equation}
(where $\Gm$ acts trivially on $\Gm$ in the second factor).

It remains to calculate $\hc_{B_{\Delta}}(\Cs{P}\star \gio_* \F)$.
Set $Z := (\ker \alpha)^0$ and let $L = C_G(Z)$ be its centralizer in
$G$. We can write $T \cong Z \times T_{v}$, with $\Gm \cong T_v$ via
the fundamental coweight corresponding to $\alpha$ (which belongs to
the cocharacter lattice because $G$ is of adjoint type). 

Now we have a Levi decomposition $P = LU$, where $U$ denotes the
unipotent radical of $P$. Hence we have a $T \times T$-equivariant
map $P \to L$ with fibers isomorphic to affine spaces. Moreover, $H :=
L/Z$ is a semi-simple group of rank 1. It follows that we have an acyclic $T
\times T$-equivariant map
\[
P \times_{T} \widetilde{G'} \to H \times G'.
\]
Moreover, the induced conjugation actions of $Z$ (resp. $T_v$) on $H$
(resp. $G'$) are trivial.

Hence
\begin{equation} \label{eq:SL2}
\hc_{T_{\Delta}}(G, \Cs{P}\star \gio_* \F)
\cong \hc_{(T_v)_\Delta}(H)  \otimes \hc_{Z_{\Delta}}(G', \F)
\end{equation}

Taking Poincar\'e series in the equations (\ref{eq:Gm},\ref{eq:SL2}), we get the relations
\begin{gather*}
    \Tr(\iota(a))=
\dim_{q,t} \hc_{B_{\Delta}}(\gio_* \F)
=\frac{1 + q^{\nicefrac 12}t}{1-q}\Tr(a)\\
\Tr(C^{\prime}_v\iota(a)) =
\dim_{q,t}\hc_{T_{\Delta}}(G, \Cs{P}\star \gio_* \F \langle 1 \rangle)
=\frac{q^{-\nicefrac{1}{2}} + qt}{1-q}\Tr(a)
\end{gather*}
This establishes one of our conditions.  For the others, we first
observe that $\si_v=C^{\prime}_v-q^{- \nicefrac{1}{2}}$ and $\si_v^{-1}=C^{\prime}_v-q^{\nicefrac{1}{2}}$.  This shows that
\begin{align*}
  \Tr(\si_v\iota(a))&=\frac{qt -t}{1-q}\cdot \Tr(a)&\Tr(\si_v^{-1}\iota(a))& = \frac{ q^{-\nicefrac 12} -q^{ \nicefrac 12 } }{1 - q}
  \cdot \Tr(a)\\
  &=-t\cdot \Tr(a)&&=q^{- \nicefrac 12}\cdot \Tr(a).
\end{align*}

Finally, we must establish our claim that
\begin{equation*}
  \Tr_{B_{n+1}}(T_{n}\iota(a)) = -t \cdot \Tr_{B_n}(a)
\end{equation*}
This follows from Corollary~\ref{final}, which shows that our trace
coincides with that considered by Gomi and from \cite[\S 4.4 \& 
4.5]{Gomi} where Gomi shows that his trace corresponds to that
constructed by Geck and Lambropoulou, which has this property by \cite[Proposition 4.5]{GL}.
\end{proof}

\begin{question} The above proof uses the Gomi's algebraic calculations to deduce that our trace satisfies all the conditions in Definition \ref{markov-B} for $y = -t$. We do not know whether the extra conditions in types $B$, $C$ and $D$ have a geometric explanation.
\end{question}

\subsection{Character sheaves and character expansions}

Any trace on a semi-simple algebra can be
expanded as a direct sum of the traces on irreducible representations,
i.e. as a sum of irreducible characters.  A formula for this was given
in type A by Ocneanu \cite{Wenzl}, in type B by Orellana
\cite{Orellana}, and in arbitrary type by Gomi \cite{Gomi}.  Using
our geometric perspective, we can give a much simpler proof than that
of \cite{Gomi}, which depended on case-by-case analysis.


Our proof is founded on the fact that the expansion of the Hecke
algebra as a sum of matrix algebras has a geometric manifestation:
given a simple perverse sheaf $\IC_w$, we can apply the ``averaging''
functor $\ind^{\GD}_{\BD}\colon D^b_{\BD}\!(G)\to D^b_{\GD}\!(G)$.
Since $G/B$ is projective, each summand
of $\iIC_w=\ind^{\GD}_{\BD}\IC_w$ is a shift of a simple perverse
sheaf by the Decomposition Theorem.
\begin{defi}
  A simple perverse sheaf in $D^b_{\GD}\!(G)$ is called a {\bf
    unipotent character sheaf} if it appears as a summand of $\iIC_w$
  for some $w$.  Following \cite[\S 5.2]{MSp}, we let $\widehat G(1)$
  denote the set of unipotent character sheaves.
\end{defi}

We choose our normalization so that each $\cV \in \widehat G(1)$
extends a local system of weight 0 in degree 0. Let $\widehat W$ denote the set of irreducible characters of $W$. For each
$\chi \in \widehat W$ there exists a unipotent character sheaf
$\cV_{\chi} \in G(1)$ whose restriction to the regular semi-simple
elements of $G$ is a local system with monodromy given by
$\chi$. Let us denote by $\widehat{ G}(1)_{ex}$ those
character sheaves in $\widehat G(1)$ which are not isomorphic to a character sheaf of the form
$\cV_{\chi}$ for $\chi \in \widehat W$.

Since induction is adjoint to restriction, we have a natural isomorphism
\begin{equation} \label{eq:ind-res}
  \hc_{\GD}\!(G;\iIC_w)\cong \hc_{\BD}\!(G;\IC_w).
\end{equation}
One might hope that the decomposition of $\iIC_w$ into simple character sheaves categorifies the formula expressing the trace of $C^{\prime}_w$ in terms of  the irreducible characters of the Hecke algebra. If this were the case then, by \eqref{eq:ind-res} we could write the weights of our trace in terms of the mixed Poincar\'e polynomials of simple character sheaves. We will see below that this is true after performing Lusztig's Fourier transform.

For $\cV\in \widehat G(1)$, we let $P_\cV(q,t)$ denote the
$G_\Delta$-equivariant mixed Poincar\'e polynomial of $\cV$.  Let
$\{\chi^{\prime} ,\chi\}$ be the matrix coefficients of Lusztig's Fourier transform.
\begin{prop}\label{weight}
 The weight $\varpi_\chi$ of the
  character $\chi$ in $\Tr$ is
  \begin{equation*}
    \varpi_\chi=\sum_{\chi^{\prime} \in \widehat W }
    \{\chi^{\prime},\chi\}\cdot P_{\cV_{\chi^{\prime}}}(q,t)
  \end{equation*}
\end{prop}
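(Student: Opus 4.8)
The plan is to compute both sides of the claimed identity by playing the geometry of the averaging functor $\ind^{\GD}_{\BD}$ against the known combinatorics of Lusztig's Fourier transform, and to identify the resulting expressions.

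First I would use the isomorphism \eqref{eq:ind-res} to rewrite $\hc_{\BD}\!(G;\IC_w)$ as $\hc_{\GD}\!(G;\iIC_w)$, and then apply the Decomposition Theorem to write $\iIC_w = \bigoplus_{\cV \in \widehat G(1)} \cV \otimes M_{\cV,w}$, where $M_{\cV,w}$ is a graded multiplicity space. Taking $\GD$-equivariant cohomology and then mixed Poincar\'e series, this gives
\begin{equation*}
P_{\IC_w}(q,t) = \sum_{\cV \in \widehat G(1)} m_{\cV,w}(q^{\nicefrac 12},t) \cdot P_{\cV}(q,t),
\end{equation*}
where $m_{\cV,w}$ records the graded multiplicities. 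So the ``weight'' of a simple character sheaf $\cV$ in $\Tr$ is literally $P_{\cV}(q,t)$, and the content of the proposition is to pass from the basis of simple character sheaves to the basis of almost-characters $\{\cV_{\chi^{\prime}}\}_{\chi^{\prime}\in\widehat W}$ and then to match up with the character basis of $\Hec_\Gamma$.

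The key input is Lusztig's theory: the classes $[\iIC_w]$ in the appropriate Grothendieck group, expanded in the basis of (normalized) character sheaves, have transition matrix to the basis indexed by $\widehat W$ governed by the nonabelian Fourier transform matrices $\{\chi^{\prime},\chi\}$; equivalently, the almost-characters $R_\chi = \sum_{\chi^{\prime}} \{\chi,\chi^{\prime}\} \chi^{\prime}$ of the Hecke algebra correspond under the character-sheaf dictionary to the sheaves $\cV_\chi$ (up to the $\widehat G(1)_{ex}$ part, which does not contribute to traces of elements coming from $\Hec_\Gamma$ via the $[\iIC_w]$, or contributes only through the Fourier-transform change of basis). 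Concretely, I would argue that for $\cV = \cV_{\chi^{\prime}}$ the multiplicity $m_{\cV_{\chi^{\prime}},w}(q^{\nicefrac 12},t)$, when summed against the Fourier transform, recovers the character multiplicity, so that expanding $\Tr(C_w^{\prime}) = \sum_{\chi} \varpi_\chi \, \chi(C_w^{\prime})$ and comparing coefficients of $\chi$ yields $\varpi_\chi = \sum_{\chi^{\prime}} \{\chi^{\prime},\chi\} P_{\cV_{\chi^{\prime}}}(q,t)$. The orthogonality/involutivity of the Fourier transform matrix is what lets one invert the change of basis cleanly.

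The main obstacle I expect is bookkeeping around normalizations and the exceptional character sheaves $\widehat G(1)_{ex}$: one must check that the shifts/twists in Definition \ref{def:ICnorm} and in the normalization of $\cV$ (``extends a local system of weight 0 in degree 0'') are compatible so that the graded multiplicities in the Decomposition Theorem really are the specializations of Lusztig's (non-graded) multiplicity numbers at $q^{\nicefrac 12}$, and that the contribution of sheaves in $\widehat G(1)_{ex}$ to the trace of $C_w^{\prime}$ either vanishes or is absorbed correctly by the Fourier transform (this is where the fact that $\{\chi^{\prime},\chi\}$ is indexed only by $\widehat W$, not all of $\widehat G(1)$, must be reconciled with the decomposition running over all of $\widehat G(1)$). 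Once the normalizations are pinned down, the identity is a formal consequence of adjunction plus Lusztig's classification of the characters appearing in $[\iIC_w]$ and the properties of the Fourier transform.
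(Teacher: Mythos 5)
Your overall strategy is the same as the paper's: pass to $\hc_{\GD}(\iIC_w)$ via adjunction, decompose $[\iIC_w]$ using Lusztig's expansion in terms of character sheaves and the nonabelian Fourier transform, and read off the weights $\varpi_\chi$. But the thing you flag as your "main obstacle" — the contribution of the exceptional character sheaves $\widehat G(1)_{ex}$ — is precisely the substantive geometric input, and you do not resolve it. The two mechanisms you float (that $\widehat G(1)_{ex}$ "does not contribute to traces of elements coming from $\Hec_\Gamma$ via the $[\iIC_w]$," or that it is "absorbed correctly by the Fourier transform") are both wrong. Lusztig's formula for $[\iIC_w]$, which the paper cites as \cite[14.11]{LusCSIII} combined with \cite[23.1]{LusCSV}, contains a genuine extra term $\sum_{\cV \in \widehat G(1)_{ex}} a_\cV \cdot [\cV]$ with nonzero coefficients, and the Fourier transform matrix $\{\chi',\chi\}$ is indexed only by $\widehat W$ and never touches this part.

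What actually removes the exceptional term is a vanishing theorem: $\hc_{\GD}(G;\cV)=0$, hence $P_\cV(q,t)=0$, for every $\cV \in \widehat G(1)_{ex}$. This is the "in particular" clause of Proposition~\ref{block}: the $\GD$-equivariant derived category of $G$ decomposes into blocks indexed by conjugacy classes of Levi subgroups $L_\cV$ supporting a cuspidal datum, so $\Ext^\bullet_{\GD}(\cV,\cW)=0$ unless $L_\cV$ and $L_\cW$ are conjugate; since the $\cV_\chi$ for $\chi\in\widehat W$ are exactly the summands of $\IndGT$, every exceptional $\cV$ has $L_\cV\neq T$ and therefore has vanishing equivariant cohomology. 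Without this geometric input, your expansion of $P_{\iIC_w}(q,t)$ still carries unidentified terms from $\widehat G(1)_{ex}$, and you cannot conclude. Your remaining normalization concerns are real but routine, and are handled in the paper's three bullet points after the formula: the shift $\langle\ell(w)\rangle$ relating $\bar K_w$ to $\iIC_w$, lifting Lusztig's non-mixed identity to the mixed setting via purity of $\iIC_w$, and the sign $\epsilon_{\cV_\chi}=(-1)^{\dim G}$.
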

\begin{proof}
%

%
For the course of the proof consider $\mathscr S$ the split Grothendieck group of $D^b_{G_{\Delta}}(G)$. If $\F \in D^b_{G_{\Delta}}(G)$ we denote by $[\F]$ its class in $\mathscr S$ and, given a polynomial $a = a_i q^{\nicefrac{i}{2}} \in \mathbb{Z}[q^{\pm \nicefrac{1}{2}}]$ we set
\[
a \cdot [\F] := \sum_i a_i [\F \langle -i \rangle ] \in \mathscr S.
\]
In this notation, combining Lusztig's formula \cite[14.11]{LusCSIII} and \cite[23.1]{LusCSV} we obtain
\begin{equation} \label{lus-Kw}
[\iIC_w] = \sum_{\chi, \chi^{\prime} \in \widehat W} \chi_q (C_w^{\prime}) \{ \chi^{\prime}, \chi \} \cdot [\cV_{\chi'}]  +  \sum_{\cV \in \widehat{ G}(1)_{ex}} a_{\cV} \cdot  [\cV].
\end{equation}
Some remarks are in order about why our formula is equivalent to Lusztig's:
\begin{itemize}
  \item  Firstly, note that for Lusztig's objects $\bar K_{w}$ we have $\bar
  K_{w} \langle \ell(w)\rangle \cong \iIC_w$ and Lusztig normalizes
  character sheaves so as to be perverse.  
\item Secondly, note that
  \cite[14.11]{LusCSIII} is an equality after forgetting mixed
  structures, however this can be lifted to the mixed setting by using
  the fact that $\iIC_w$ is pure of weight zero. 
\item Thirdly, the sign
  $\epsilon_{\cV_{\chi}}$ (see \cite[13.10]{LusCSIII}) is equal to
  $(-1)^{\dim G}$ as $A_{\chi}$ occurs as a direct summand of the
  Springer sheaf $\iIC_{\id}$.
\end{itemize}

We show in Proposition \ref{formal} below that the $G_{\Delta}$-equivariant cohomology of any $\cV \in \widehat G(1)_{ex}$ is zero. Hence, applying hypercohomology to \eqref{lus-Kw} we obtain (note also \eqref{shifttwist})
\[
\Tr(C^{\prime}_w) =P_{\IC_w}(q,t)=P_{\iIC_w}(q,t)
=\sum_{\chi\in \widehat W} \chi_q(
C_w^{\prime} )\Big(\sum_{\chi^{\prime} \in \widehat
   W}\{\chi^{\prime},\chi\} \cdot P_{\cV_{\chi^{\prime}}}(q,t)\Big)
   \]
   and the proposition follows.
\end{proof}




Thus,
the primary point remaining to us is the computation of the mixed
Poincar\'e polynomial $P_\cV(q,t)$.  Our first step is to show that a
number of character sheaves have trivial cohomology.

If $L\subset G$ is the Levi of a parabolic, then there are adjoint
{\bf parabolic restriction} and {\bf parabolic induction} functors
\begin{equation*}
  \xy
(0,0)*{D^b_{L_\Delta}(L)}="A";
(40,0)*{D^b_{\GD}\!(G)}="B";
{\ar@/_12pt/_{\Ind^G_L} "A";"B"};
{\ar@/_12pt/_{\Res^G_L} "B";"A"};
\endxy
\end{equation*}
relating the conjugation-equivariant derived categories of these groups, defined by Lusztig \cite{LusCSI}.  These functors are different from the usual restriction and induction functors on equivariant derived categories, which we write uncapitalized.

For each character sheaf $\cV$, there is up to conjugacy a unique minimal Levi $L_\cV$ such that $\cV$ is a summand of $\Ind^G_L\cV'$ where $\cV'$ is a strongly cuspidal character sheaf on $L$.  This is also the minimal Levi for which $\Res^G_L\cV\neq 0$.

In the equivariant derived category $D_{\GD}\!(G)$, these Levi subgroups index a block decomposition of the category generated by character sheaves.   That is,
\begin{prop}\label{block}
  If $\cV,\cW$ are character sheaves and
  $\Ext_{\GD}^\bullet(\cV,\cW)\neq \{0\}$, then $L_\cV$ is conjugate to $L_\cW$.   

In particular, if $L_\cV\neq T$, then $\hc_{\GD}\!(G;\cV)=\{0\}$ and $P_\cV(q,t)=0$.
\end{prop}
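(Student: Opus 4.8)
The plan is to deduce the block decomposition from the compatibility of parabolic induction with the equivariant cohomology functors, together with the adjunction $(\Ind^G_L, \Res^G_L)$ and the transitivity of these functors. First I would reduce the $\Ext$-vanishing statement to the case where $\cV$ and $\cW$ are each obtained by parabolic induction from a strongly cuspidal character sheaf on a Levi, say $\cV$ is a summand of $\Ind^G_{L}\cV'$ and $\cW$ a summand of $\Ind^G_{M}\cW'$ with $\cV', \cW'$ strongly cuspidal on $L = L_\cV$, $M = L_\cW$. Using the adjunction, $\Ext^\bullet_{\GD}(\cV, \cW)$ is computed by $\Ext^\bullet_{L_\Delta}(\Res^G_L\cV, \cdots)$-type expressions; more usefully, $\Hom_{D^b_{\GD}(G)}(\Ind^G_L \cV', \Ind^G_M \cW'[n])$ is governed, via Lusztig's analysis (or the Mackey-style formula for the composite $\Res^G_M \Ind^G_L$), by a sum over double cosets $W_M \backslash \{w : {}^w L \subset M \text{ or } \ldots\} / W_L$ of terms involving $\Hom$ between $\cV'$ and conjugates of $\cW'$ restricted appropriately. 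The key input is that a strongly cuspidal sheaf has vanishing $\Res$ to any proper Levi, so all the cross-terms die unless $L$ and $M$ are conjugate; this is exactly the content that forces $L_\cV \sim L_\cW$ whenever $\Ext^\bullet_{\GD}(\cV, \cW) \neq 0$.

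For the second assertion, suppose $L_\cV \neq T$. The point is that $\hc_{\GD}(G; \cV) = \Ext^\bullet_{\GD}(\un{\Qlb}_G, \cV)$, and the constant sheaf $\un{\Qlb}_G$ — more precisely the Springer sheaf $\iIC_{\id} = \ind^{\GD}_{\BD}\IC_{\id}$, whose summands are the character sheaves $\cV_\chi$ for $\chi \in \widehat W$ together with possibly others — has all of its character-sheaf constituents lying in the block of $L = T$. Concretely, every $\cV_\chi$ is a summand of $\Ind^G_T(\un{\Qlb}_T)$ (the Springer correspondence), so $L_{\cV_\chi} = T$. Therefore if $\cV$ lies in a block with $L_\cV \neq T$, it has no $\Ext$'s to any summand of the Springer sheaf, and in particular $\Ext^\bullet_{\GD}(\iIC_{\id}, \cV) = 0$. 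Since $\hc_{\GD}(G; \cV)$ is a summand (after the appropriate twist) of $\hc_{\GD}(G; \iIC_{\id} \star \cdots)$ — or more directly, since $\hc_{\GD}(G;-)$ on the block-summand category generated by $\cV$ factors through $\Ext$'s from the unit object of that block, which is not $\cV$'s block — we conclude $\hc_{\GD}(G; \cV) = 0$ and hence $P_\cV(q,t) = 0$.

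I expect the main obstacle to be making precise the Mackey/transitivity formula for $\Res^G_M \circ \Ind^G_L$ in the mixed equivariant setting with enough control to see that only the terms with $L$ conjugate to $M$ survive: Lusztig proves the relevant disjointness results for character sheaves (the "cleanness" and cuspidal support theory), but one must check these pass to $\Ext^\bullet_{\GD}$ rather than just $\Hom$ in the abelian category, and that the mixed structure does not introduce extra morphisms. A clean way around this is to invoke Lusztig's result directly: character sheaves with non-conjugate cuspidal support lie in orthogonal blocks of the (non-mixed) equivariant category, and then lift to the mixed setting using that all objects in sight are pure — exactly as was done for \eqref{lus-Kw} in the proof of Proposition \ref{weight}. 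The remaining step, identifying the unit object's block with $L = T$ via the Springer sheaf, is then routine given the fact already recorded in the excerpt that $A_\chi$ occurs in $\iIC_{\id}$.
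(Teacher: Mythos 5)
Your ``clean way around this'' at the end is essentially the paper's proof, but the paper is much shorter and your diagnosis of the gap to be bridged is slightly off. The paper simply cites Lusztig's non-equivariant orthogonality result (Character Sheaves II, 7.2) and passes to the $\GD$-equivariant setting via the spectral sequence
\[
E_2^{p,q} = \Ext^q(\cV,\cW)\otimes H^p_{\GD}(\pt) \;\Rightarrow\; \Ext^{p+q}_{\GD}(\cV,\cW),
\]
which collapses to zero once the non-equivariant $\Ext$'s vanish. The gap is thus \emph{non-equivariant versus equivariant}, not mixed versus non-mixed, and the resolving tool is the equivariance spectral sequence, not purity; purity is irrelevant here because the non-equivariant $\Ext$ vanishes outright, so there is nothing for Frobenius to potentially add. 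Your first paragraph, attempting to re-derive Lusztig's block decomposition from a Mackey formula for $\Res^G_M\Ind^G_L$ together with the vanishing of $\Res$ on cuspidals, is a genuinely different and considerably more laborious route: you rightly flag that controlling this at the level of $\Ext^\bullet_{\GD}$ is delicate, and the paper avoids the issue entirely by treating Lusztig's theorem as a black box. Your deduction of the ``in particular'' clause is correct and matches what the paper leaves implicit: $\hc_{\GD}(G;\cV) = \Ext^\bullet_{\GD}(\un{\Qlb}_G,\cV)$, and the suitably shifted constant sheaf is a unipotent character sheaf with cuspidal support $T$, so the vanishing follows from the first sentence once $L_\cV\neq T$.
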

\begin{proof}
  This was proved in the non-equivariant situation by Lusztig in
  \cite[7.2]{LusCSII}, and the equivariant result follows immediately from the existence of the spectral sequence
  \[
  E_2^{p,q} = \Ext^q(\cV,\cW) \otimes H_{\GD}^{p}(pt) \Rightarrow \Ext_{\GD}^{p+q}(\cV,\cW). \qedhere
  \] \end{proof}

One simple corollary of Proposition \ref{block} is that:

\begin{thm}\label{formal}
  Any unipotent character sheaf is $\GD$-equivariantly formal.
\end{thm}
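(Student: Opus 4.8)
The plan is to bootstrap from Proposition~\ref{block}, which already handles every unipotent character sheaf $\cV$ with $L_\cV \neq T$: in that case $\hc_{\GD}(G;\cV) = \{0\}$, so $\cV$ is trivially equivariantly formal. Thus the whole content of the theorem is the case $L_\cV = T$, i.e.\ the character sheaves appearing in $\Ind^G_T$ of a (necessarily one-dimensional cuspidal, hence trivial) local system on $T$ — in other words, the sheaves $\cV_\chi$ for $\chi \in \widehat W$, which are exactly the summands of the Springer sheaf $\iIC_{\id} = \ind^{\GD}_{\BD}\IC_{\id}$. So it suffices to prove that $\iIC_{\id}$, or equivalently each of its simple summands, is $\GD$-equivariantly formal.

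First I would reduce equivariant formality of a direct summand to equivariant formality of the ambient sheaf: a direct summand of an equivariantly formal complex is again equivariantly formal, since the degeneration of the spectral sequence $E_2^{p,q} = H^p_{\GD}(\pt)\otimes \HH^q(G;\cV) \Rightarrow \hc^{p+q}_{\GD}(G;\cV)$ is inherited by summands (the spectral sequence is functorial and additive in $\cV$). So I only need that $\iIC_{\id}$ itself is $\GD$-equivariantly formal. Now $\iIC_{\id} = \ind^{\GD}_{\BD}\IC_{\id}$, and by the adjunction isomorphism \eqref{eq:ind-res} we have $\hc_{\GD}(G;\iIC_{\id}) \cong \hc_{\BD}(G;\IC_{\id})$. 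But $\IC_{\id}$ is just (a shift-twist of) the constant sheaf on the closed point $B \in G$, more precisely the skyscraper-type sheaf supported on the identity coset; its $\BD$-equivariant cohomology is the $\BD$-equivariant cohomology of a point, which is visibly pure and equivariantly formal. Tracking this through the induction equivalence, and using that $G_\Delta/B_\Delta \cong G/B$ is projective so that the Decomposition Theorem applies and no weights are lost, gives that $\iIC_{\id}$ is pure and that its equivariant cohomology is free over $H^*_{\GD}(\pt)$.

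The main obstacle — and the place where I expect to have to be careful — is the bookkeeping of normalizations and weights: one must check that the purity of $\IC_{\id}$ as a $\BD$-equivariant complex really does transport to purity of $\iIC_{\id}$ as a $\GD$-equivariant complex under $\ind^{\GD}_{\BD}$, and that ``equivariantly formal'' (freeness of $\hc_{\GD}$ over $H^*_{\GD}(\pt)$) follows from purity in this equivariant-on-a-group setting. Here I would invoke the general principle that a pure complex on a space with an action of a connected group is equivariantly formal for that action — the relevant weight-vs-cohomological-degree argument forcing the Leray–Hirsch/spectral sequence to degenerate — and note that $\GD$ is connected. An alternative, perhaps cleaner, route avoiding $\iIC_{\id}$ altogether: combine Proposition~\ref{block} with the already-cited result \cite[Theorem 1.4]{WW} identifying $\hc_{\BD}(\IC_w)$ with Hochschild homology of the Soergel bimodule $S_w$ together with the main computation of the paper; but since the theorem is stated as ``one simple corollary of Proposition~\ref{block},'' the intended argument is surely the short one: $L_\cV \neq T$ kills the cohomology, and $L_\cV = T$ forces $\cV$ to be a Springer summand, which is formal because the Springer sheaf is.
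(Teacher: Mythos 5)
Your strategy is exactly the paper's: split on whether $L_\cV = T$, kill the $L_\cV \neq T$ case via Proposition~\ref{block}, and in the remaining case note that $\cV$ is a summand of the Springer sheaf $\iIC_{\id}$, whose $\GD$-formality is deduced from the $\BD$-formality of $\IC_{\id} = \underline{\Qlb}_B$ via the adjunction \eqref{eq:ind-res} and the freeness of $H^*_{\BD}(\pt)$ over $H^*_{\GD}(\pt)$. Your justification of that base step, however, is off in two ways. First, the closed orbit $B\dot eB = B \subset G$ is the Borel subgroup, not a point (the skyscraper picture belongs to $G/B$, not to $G$), and $\hc_{\BD}(\IC_{\id}) \cong \hc_{\BD}(B) \cong H^*_T(\pt)\otimes H^*(T) \cong \Sym(\ft^*)\otimes\wedge^\bullet\ft^*$, using $B \cong U\rtimes T$ with $U$ contractible and $T_\Delta$ acting trivially on $T$ by conjugation. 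Second, the ``pure implies equivariantly formal'' heuristic does not literally apply: the odd-degree classes in $H^*(T)$ have weight $\neq$ cohomological degree. Neither of these is needed, though --- the explicit description just given is manifestly free over $H^*_{\BD}(\pt) = \Sym(\ft^*)$, hence over $H^*_{\GD}(\pt) = \Sym(\ft^*)^W$, and this elementary computation is what the paper is silently appealing to when it asserts that $\IC_1 = \underline{\Qlb}_B$ is equivariantly formal. So the architecture of your argument matches the paper's and is correct, but the ``base case'' needs the direct computation above rather than the point/purity shortcut.
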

\begin{proof}
If $\cV$ is induced from a cuspidal sheaf on a non-abelian Levi, then by Proposition \ref{block}, $\hc_{G_\Delta}(\cV)=0$, so $\cV$ is equivariantly formal.

On the other hand, every character sheaf not covered by the
previous assertion is a summand of $\IndGT$, which is equivariantly formal, since $\IC_{1}=\Qlb_B$ is.
\end{proof}

This allows us to answer definitively a question answered positively for type A in a previous paper of the authors \cite{WW}.

\begin{cor}
  For any $G$ and any $w\in W$, the sheaf $\IC_w$ is
  $\BD$-equivariantly formal.
\end{cor}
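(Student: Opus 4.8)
The plan is to deduce the $\BD$-equivariant formality of $\IC_w$ from the $\GD$-equivariant formality of unipotent character sheaves established in Theorem~\ref{formal}, using the adjunction between $\ind^{\GD}_{\BD}$ and the forgetful (restriction) functor $\res^{\GD}_{\BD}$. The key observation is the isomorphism \eqref{eq:ind-res}, namely $\hc_{\BD}(G;\IC_w)\cong\hc_{\GD}(G;\iIC_w)$, which reduces the question about $\IC_w$ to the same question about $\iIC_w=\ind^{\GD}_{\BD}\IC_w$. Since $G/B$ is projective, the Decomposition Theorem tells us $\iIC_w$ is a direct sum of shifts of simple perverse sheaves in $D^b_{\GD}(G)$, each of which is a unipotent character sheaf by definition; hence $\iIC_w$ is a direct sum of shifts of unipotent character sheaves.

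First I would recall that equivariant formality for a group action is preserved under finite direct sums and under the shift-twist functors $\langle m\rangle$ (this is immediate, since formality is the statement that the spectral sequence $H^\bullet_{\GD}(\pt)\otimes\mathbb{H}^\bullet(\cV)\Rightarrow\hc_{\GD}(\cV)$ degenerates, or equivalently that $\hc_{\GD}(\cV)$ is free over $H^\bullet_{\GD}(\pt)$, and freeness passes through finite sums and grading shifts). Next I would invoke Theorem~\ref{formal}: every unipotent character sheaf is $\GD$-equivariantly formal. Combining these, $\iIC_w$ is $\GD$-equivariantly formal. Then I would transport this back along the adjunction isomorphism \eqref{eq:ind-res}, which identifies not just the cohomology groups but their module structures over $H^\bullet_{\GD}(\pt)\cong H^\bullet_{\BD}(\pt)$ (the adjunction $\ind\dashv\res$ is $H^\bullet_{\GD}(\pt)$-linear, and the forgetful map on equivariant cohomology rings $H^\bullet_{\GD}(\pt)\to H^\bullet_{\BD}(\pt)$ is an isomorphism after suitable identification, or at least one checks freeness is preserved). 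Therefore $\hc_{\BD}(G;\IC_w)$ is free over $H^\bullet_{\BD}(\pt)$, i.e.\ $\IC_w$ is $\BD$-equivariantly formal.

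The main obstacle is making the adjunction argument clean: one must be careful that \eqref{eq:ind-res} is an isomorphism of graded modules over the base cohomology ring, and that the formality of $\iIC_w$ as a $\GD$-equivariant complex genuinely implies the formality of $\IC_w$ as a $\BD$-equivariant complex. The subtlety is that $\ind^{\GD}_{\BD}$ applied to a non-formal complex could conceivably become formal (smoothing out the spectral sequence), so the implication goes in the convenient direction only because the adjunction isomorphism on hypercohomology is compatible with the module structures and because $H^\bullet_{\GD}(\pt)\to H^\bullet_{\BD}(\pt)$ identifies the relevant polynomial rings up to the $W$-action — concretely, $\hc_{\BD}(G;\IC_w)\cong \hc_{\GD}(G;\iIC_w)$ and the latter is free over $H^\bullet_{\GD}(\pt)$, which maps onto $H^\bullet_{\BD}(\pt)^W\subset H^\bullet_{\BD}(\pt)$, so one needs the extra input that $\hc_{\BD}(G;\IC_w)$ is in fact free over all of $H^\bullet_{\BD}(\pt)$, not merely over the invariants. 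This is exactly where one uses that $B_\Delta/T_\Delta$ is an affine space (reducing to $T_\Delta$) together with the structure of $\hc_{\GD}(G;\iIC_w)$ as computed via character sheaves. The equivalent reformulation in terms of Soergel bimodules — that $\HH^\bullet(S_w)$ is free — then follows from the isomorphism $\hc_{\BD}(\IC_w)\cong\HH^\bullet(S_w)$ recalled earlier in the paper from \cite[Theorem 1.4]{WW}.
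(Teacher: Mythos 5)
Your reduction is correct and matches the paper's: $\iIC_w = \ind^{\GD}_{\BD}\IC_w$ is a direct sum of shifts of unipotent character sheaves (Decomposition Theorem plus properness of $G/B$), so by Theorem~\ref{formal} it is $\GD$-equivariantly formal, and the adjunction isomorphism $\hc_{\BD}(G;\IC_w)\cong\hc_{\GD}(G;\iIC_w)$ shows $\hc_{\BD}(G;\IC_w)$ is free over $H^\bullet_{\GD}(\pt)=H^\bullet(BG)$. You also correctly pinpoint the remaining issue: this gives freeness over the invariant subring $H^\bullet(BT)^W\cong H^\bullet(BG)$, whereas $\BD$-equivariant formality requires freeness over the larger ring $H^\bullet_{\BD}(\pt)\cong H^\bullet(BT)$, and the implication ``free over a subring $\Rightarrow$ free over the ring'' is false in general.

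Where your proposal falls short is in the final step. The sentence ``this is exactly where one uses that $B_\Delta/T_\Delta$ is an affine space (reducing to $T_\Delta$) together with the structure of $\hc_{\GD}(G;\iIC_w)$ as computed via character sheaves'' is not an argument. The affineness of $B_\Delta/T_\Delta$ only gives $H^\bullet_{B_\Delta}\cong H^\bullet_{T_\Delta}$, which is orthogonal to the actual difficulty — the jump from $W$-invariants to the full polynomial ring. And no specific feature of the character-sheaf computation is invoked that would yield freeness over $H^\bullet(BT)$. The paper closes this gap purely by commutative algebra: $H^\bullet(BT)$ is a polynomial ring, hence regular, so a finitely generated module over it is free iff it has maximal depth at each point (Auslander--Buchsbaum). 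Depth is unchanged when computed over the finite subring $H^\bullet(BT)^W$ (Bruns--Herzog, Proposition~1.2.16, using that $H^\bullet(BT)$ is a finite module over $H^\bullet(BT)^W$ and both are Cohen--Macaulay), and over that subring the module is free, hence of maximal depth. Without some such argument, your proof has a genuine hole at the very step you flagged.
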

We thank Hailong Dao for help with the following proof. 
\begin{proof}
The sheaf $\iIC_w$ is equivariantly formal, since it is a sum of unipotent character sheaves.  
Thus the cohomology $H^*_{\BD}(\IC_w)=H^*_{\GD}(\iIC_w)$ is free as a module over $H^*(B\GD)$; we wish to conclude that it is free as a module over the larger ring $H^*(B\BD)$. 

This follows from standard arguments in commutative algebra: since $\Spec H^*(B\BD)$ is smooth, it suffices to show that the depth of the stalk at each point is the dimension of the variety.  On the other hand, this follows immediately from Proposition 1.2.16 in \cite{BH}, and the fact that both $H^*(B\BD)$ and $H^*(B\GD)$ are smooth and thus Cohen-Macaulay.
\end{proof}

\subsection{Character sheaves and Molien series}
\label{sec:char-sheav-moli}

Proposition \ref{block} shows that in equation (\ref{weight}), we need only sum over the sheaves
that appear as summands of $\IndGT$, since the trivial local system is
the only unipotent character sheaf on $T$.  Let us give a more careful description of these sheaves.

Consider the variety $\tG=G_\Delta\times_{B_\Delta}B$ and let
$c:G_\Delta\times_{B_\Delta}B\to G$ be the map induced by
$(g,t)\mapsto gtg^{-1}$.  A unipotent character sheaf is induced from $T$ if and only if it is a summand of $\IndGT=c_*\underline{\Qlb}_{\tG}$.

It is known that the map $\tG\to G$ is small, and so the sheaf $\IndGT$ is the intermediate extension of a local system on the regular semi-simple locus $G_{reg}$, since $\tG_{reg}\to G_{reg}$ is a Galois covering with Galois group $W$.  Because intermediate extension preserves endomorphism rings, 
\[
\End(\IndGT)=\End_{\Qlb W}(\Qlb W)=\Qlb W.
\] 
Thus, the isomorphism types of simple summands which appear are in canonical bijection with simple representations of $W$. Given an irreducible character $\nu \in \widehat W$ recall that we denote by $\cV_{\nu}$ the corresponding unipotent character sheaf.

Set $\ft^* = X(T) \otimes_{\mathbb{Z}} \Qlb$, where $X(T)$ denotes the lattice of characters of $T$. Note that $\ft*$ is a $W$-module in a natural way, and set $E = \Sym^\bullet(\ft^*)\otimes
\wedge^\bullet\ft^*$, which we may view as a $W$-module with the tensor product action. We have a canonical identification $E \cong H^*_{T_\Delta}(T)$.
The geometric realization of this algebra equips it with a bigrading by degree and weight as in Section \ref{sec:categorified-trace}. Given an irreducible character $\nu \in \widehat W$ the graded dimension
\[
M_\nu(q,t)=\sum_{i,j}q^{i/2}t^{j-i}\dim\Hom_W(V_\nu,E)^{i,j}
\]
is called the {\bf Molien series} of $\nu$.
\begin{prop}
\begin{math}
     \displaystyle H^\bullet_{G_\Delta}(\cV_\nu)= \Hom_W( V_{\nu}, E).
\end{math}
\end{prop}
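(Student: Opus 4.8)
The plan is to compute $H^\bullet_{G_\Delta}(\cV_\nu)$ by first computing the equivariant cohomology of the whole induced sheaf $\IndGT = c_*\underline{\Qlb}_{\tG}$ and then isolating the $\nu$-isotypic piece. The starting observation is that equivariant cohomology is insensitive to the pushforward $c$, so that
\begin{equation*}
H^\bullet_{G_\Delta}(G; \IndGT) = H^\bullet_{G_\Delta}(G; c_*\underline{\Qlb}_{\tG}) \cong H^\bullet_{G_\Delta}(\tG; \underline{\Qlb}_{\tG}) = H^\bullet_{G_\Delta}(G_\Delta \times_{B_\Delta} B).
\end{equation*}
Since $G_\Delta \times_{B_\Delta} B$ is by construction the associated bundle, the last group is just $H^\bullet_{B_\Delta}(B)$, which equals $H^\bullet_{T_\Delta}(T) = E$ because $B_\Delta / T_\Delta$ and $B/T$ are (equivariantly) affine spaces. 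So the first step establishes $H^\bullet_{G_\Delta}(\IndGT) \cong E$ as a bigraded vector space — and in fact as a $W$-module, once we track the $W$-action.

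The second step is to make the $W$-action visible and compatible on both sides. The map $\tG_{reg} \to G_{reg}$ is a Galois $W$-covering (as recalled just above the statement), so $\End(\IndGT) = \Qlb W$ acts on $\IndGT$; under the chain of isomorphisms above this $W$-action must match the deck-transformation action on $H^\bullet_{G_\Delta}(\tG)$, which in turn — via the identification with $H^\bullet_{T_\Delta}(T) = \Sym^\bullet(\ft^*)\otimes\wedge^\bullet\ft^*$ — is precisely the natural $W$-action on $E$ coming from the $W$-action on $T$. (Here one uses that $\tG \to G$ restricted over $G_{reg}$ is the quotient by the free $W$-action, and equivariant cohomology commutes with this finite free quotient since the coefficients have characteristic zero; the identification is $W$-equivariant because it is induced by the torus inclusion $T \hookrightarrow B$ and the Weyl group acts compatibly on $T$ and on $E$.) Then, because $\IndGT$ decomposes in $D^b_{G_\Delta}(G)$ as $\bigoplus_\nu \cV_\nu \otimes V_\nu$ — the isotypic decomposition matching $\End(\IndGT) = \Qlb W$ — taking $G_\Delta$-equivariant cohomology gives
\begin{equation*}
E \cong H^\bullet_{G_\Delta}(\IndGT) \cong \bigoplus_{\nu \in \widehat W} H^\bullet_{G_\Delta}(\cV_\nu) \otimes V_\nu
\end{equation*}
as $W$-modules, and extracting the $\nu$-isotypic component (i.e.\ applying $\Hom_W(V_\nu, -)$) yields $H^\bullet_{G_\Delta}(\cV_\nu) = \Hom_W(V_\nu, E)$. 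One must check this is an equality of \emph{bigraded} vector spaces, but this is automatic: every map in sight is induced by a morphism of mixed complexes, and the $W$-action preserves the bigrading, so the isotypic projection respects degree and weight.

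The main obstacle is the second step — verifying that the abstract $W$-action coming from $\End(\IndGT) = \Qlb W$ (defined via intermediate extension from the regular semisimple locus) is carried, under the cohomological identifications, to the \emph{geometric} $W$-action on $E = H^\bullet_{T_\Delta}(T)$. One has to be careful that the isomorphism $H^\bullet_{G_\Delta}(\IndGT) \cong H^\bullet_{T_\Delta}(T)$ is realized by an actual geometric map (the composite $T \hookrightarrow B$, $G_\Delta\times_{B_\Delta} B$, $c$) rather than merely a dimension count, so that the $W$-equivariance can be read off; the cleanest route is to restrict everything to $G_{reg}$, where $\tG_{reg} \to G_{reg}$ is an honest $W$-torsor and the matching of actions is transparent, and then observe that the cohomology of $\IndGT$ (being a pushforward of a constant sheaf on a small map) is computed already over $G_{reg}$ up to the contractible fibers, so nothing is lost. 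Everything else is bookkeeping with Künneth and the affine-bundle invariance of equivariant cohomology.
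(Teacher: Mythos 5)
Your first step—computing $H^\bullet_{G_\Delta}(\IndGT)\cong H^\bullet_{B_\Delta}(B)\cong E$ and then extracting the $\nu$-isotypic piece using $\End(\IndGT)=\Qlb W$—is correct and matches the paper exactly. You also correctly identify where the real work is: showing that the $W$-action inherited on $E$ via this chain of isomorphisms is the natural one. But your resolution of this point has a gap.

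The issue is the claim that ``the cohomology of $\IndGT$ \ldots\ is computed already over $G_{reg}$ up to the contractible fibers, so nothing is lost.'' This is false: the restriction map $H^\bullet_{G}(\tG)\to H^\bullet_{G}(\tG_{reg})$ is \emph{not} an isomorphism. Concretely, $H^\bullet_G(\tG)\cong H^\bullet_T(\pt)\otimes H^\bullet(T)$ while $H^\bullet_G(\tG_{reg})\cong H^\bullet_T(\pt)\otimes H^\bullet(T_{reg})$, and $T_{reg}$ is $T$ minus a divisor, so its cohomology is genuinely larger. Relatedly, your phrase ``this $W$-action must match the deck-transformation action on $H^\bullet_{G_\Delta}(\tG)$'' doesn't quite parse: the deck transformations live only on $\tG_{reg}$ (the $W$-action does not extend as automorphisms of $\tG$), so one cannot compare the two actions on $\tG$ directly. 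The $W$-action on $H^\bullet_G(\tG)$ comes from the abstract isomorphism $\End(\IndGT)\cong \Qlb W$, and transporting it to the geometric deck-transformation picture requires precisely the restriction argument you are trying to avoid.

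What actually makes the argument work—and what the paper does—is a weaker but sufficient statement: $W$ acts by \emph{algebra} automorphisms, so it suffices to match the two actions on generators of $E$, namely $H^1$ and the pure classes in $H^2$. One then checks that $H^\bullet(T)\to H^\bullet(T_{reg})$ is injective on $H^1$ (removing a divisor from a smooth variety) and an isomorphism on the subspace of pure classes (there are no pure classes of positive degree in either $H^\bullet(T)$ or $H^\bullet(T_{reg})$, so both reduce to $H^\bullet_T(\pt)$). With this, the restriction to $G_{reg}$ does detect the $W$-action on generators, and the rest of your argument goes through. Without this injectivity-on-generators lemma, ``the matching of actions is transparent'' is an assertion, not a proof.
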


\begin{proof}
Because $K_1 = c_* \Qlb_{\widetilde{G}}$ we have canonical isomorphisms
\begin{equation} \label{eq:E1}
H^\bullet_{\GD}(\IndGT)\cong H^\bullet_G(\widetilde{G}) \cong H^\bullet_{\BD}(B)\cong H^{\bullet}_T(pt) \otimes H^{\bullet}(T) \cong E.
\end{equation}
The $W$-action on $\IndGT$ induces a $W$-action on $E$ such that with this induced action 
\[
H^\bullet_{\GD}(\cV_\nu,E)= \Hom_W (V_\nu, E).
\]  
Thus, we need only check that the induced $W$-action on $E$ via the the isomorphisms \eqref{eq:E1} is the natural action.  Since $W$ acts by algebra homomorphisms, it suffices to check this on $H^1$ and on pure classes in $H^2$. To this end, note that the restriction map
\begin{equation} \label{eq:reg-restrict}
H^\bullet_G(\widetilde{G}) \to H^\bullet_G(\widetilde{G}_{reg})
\end{equation}
is $W$-equivariant, where the $W$-action on $H_G(\widetilde{G}_{reg})$ is induced by the deck transformations of the covering $\widetilde{G}_{reg} \to G_{reg}$. We claim that the restriction map \eqref{eq:reg-restrict} is an injection on $H^1$ and an isomorphism on the subspace of pure classes.

We may identify $\widetilde{G}_{reg} = G/T \times T_{reg}$. Under this identification the $W$-action by deck transformations is given by $w \cdot (gT, t) = (g\dot{w}^{-1}T, \dot{w}t\dot{w}^{-1})$. Furthermore the $G$-action on $T_{reg}$ is trivial and hence
\begin{equation} \label{eq:E2}
H^\bullet_G(\widetilde{G}_{reg}) = H^\bullet_G(G/T) \otimes H^{\bullet}(T_{reg}) \cong H^\bullet_T(pt) \otimes H^{\bullet}(T_{reg}).
\end{equation}
It is straightforward to check that under the isomorphisms \eqref{eq:E1} and \eqref{eq:E2} the restriction map \eqref{eq:reg-restrict} is the tensor product of the identity on $H_T^{\bullet}(pt)$ and the restriction map $H^\bullet(T) \to H^\bullet(T_{reg})$ on ordinary cohomology.

Now $T_{reg}$ is obtained from the smooth variety $T$ by removing a divisor, and so $H^{\bullet}(T) \to H^{\bullet}(T_{reg})$ is injective on $H^1$ and there are no pure classes in positive degree in either group. Hence the pure classes in both \eqref{eq:E1} and \eqref{eq:E2} consists of the image of $H^{\bullet}_T(pt)$. It follows that \eqref{eq:reg-restrict} is injective on $H^1$ and an isomorphism on pure classes as claimed.

Finally, given the above description of the action of $W$ on $G/T \times T_{reg}$ one may conclude that the induced action of $W$ on \eqref{eq:E2} is the tensor product of the natural action of $W$ on $H_T(\pt) = \Sym^\bullet(\ft^*)$ and action induced by conjugation on $H^\bullet(T_{reg})$. The proposition then follows.
\excise{
For the latter, this follows by \cite[Theorem 7.2.2]{CG97}; for the former, a theorem of Ginzburg  \cite[Theorem 8.1]{Ginzburg-character} shows that the category of perverse sheaves Serre generated by $\IndGT$ is equivalent to the representations of the smash product $\Qlb[W]\# \Sym^\bullet \ft$ sending $\IndGT$ to $\Qlb[W]$ with trivial $\Sym^\bullet \ft$-action.  Thus, we have that \[H^1(\IndGT)=\Ext^1_{\mathsf{Perv}}(\Qlb_{G},\IndGT)=\Ext^1_{\Qlb[W]\#\Sym^\bullet h}(\Qlb,\Qlb[W])=\Ext^1_{\Sym^\bullet \ft}(\Qlb,\Qlb)=\wedge^\bullet\ft^*\] with the natural $W$-action. 
}
\end{proof}

This geometric realization of the Molien series allows us to arrive at
the main theorem of \cite{Gomi} by a very different route.
\begin{cor}\label{final}
  For all characters $\nu\in \widehat W$, we have
  $M_\nu(q,t)=P_{\cV_\nu}(q,t)$. In particular, by Proposition
  \ref{weight}, it follows that
  \begin{equation*}
    \Tr(x)=\sum_{\chi\in\widehat W}\chi_q(x)\cdot\{\chi,\nu\}\cdot M_\nu(q,t).
  \end{equation*}
\end{cor}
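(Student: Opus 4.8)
The plan is to combine the previous proposition, which identifies $H^\bullet_{G_\Delta}(\cV_\nu)$ with $\Hom_W(V_\nu, E)$ as a bigraded vector space, with the definition of the Molien series. Since $M_\nu(q,t)$ is by definition the bigraded dimension $\sum_{i,j} q^{i/2} t^{j-i} \dim \Hom_W(V_\nu, E)^{i,j}$, and $P_{\cV_\nu}(q,t)$ is by Definition \ref{defi-mPs} the bigraded dimension of $H^\bullet_{G_\Delta}(G;\cV_\nu)$, the equality $M_\nu(q,t)=P_{\cV_\nu}(q,t)$ is immediate once one checks that the isomorphism of the previous proposition is compatible with both gradings — the cohomological ($q$) grading and the weight ($t$) grading. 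For the cohomological grading this is built into the statement; for the weight grading one must observe that all the isomorphisms in \eqref{eq:E1} and \eqref{eq:E2} are isomorphisms of mixed sheaves/spaces (the maps involved are induced by geometric morphisms and the K\"unneth formula, all of which respect weights), so the induced identification $H^\bullet_{G_\Delta}(\cV_\nu)\cong \Hom_W(V_\nu, E)$ carries the geometric weight filtration to the weight grading on $E = \Sym^\bullet(\ft^*)\otimes \wedge^\bullet\ft^*$ described in Section \ref{sec:char-sheav-moli}.

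For the second assertion, the plan is simply to substitute. By Proposition \ref{weight}, the weight of $\chi$ in $\Tr$ is $\varpi_\chi = \sum_{\chi'\in\widehat W}\{\chi',\chi\}\cdot P_{\cV_{\chi'}}(q,t)$, and by Proposition \ref{block} (as noted in the opening of Section \ref{sec:char-sheav-moli}) the only character sheaves with nonzero $G_\Delta$-equivariant cohomology that can contribute are those induced from $T$, namely the $\cV_{\chi'}$ for $\chi'\in\widehat W$ — so the sum in Proposition \ref{weight} already ranges only over these. Replacing each $P_{\cV_{\chi'}}(q,t)$ by $M_{\chi'}(q,t)$ using the first part, and then writing $\Tr(x) = \sum_{\chi}\chi_q(x)\varpi_\chi$ for the character expansion of a trace on the semisimple Hecke algebra, yields
\begin{equation*}
\Tr(x) = \sum_{\chi\in\widehat W}\chi_q(x)\sum_{\nu\in\widehat W}\{\chi,\nu\}\cdot M_\nu(q,t),
\end{equation*}
which is the stated formula (with summation index $\nu$ in place of $\chi'$).

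I expect the only genuine point requiring care — and hence the main obstacle, though a mild one — to be the verification that the isomorphism of the previous proposition is strictly grading-preserving for the $t$-grading, i.e. that the $W$-action introduced on $E$ through \eqref{eq:E1} respects the weight grading and that no twist is introduced along the way. This is essentially already handled by the proof of the preceding proposition, where the $W$-action on $E$ is shown to be the natural one via the restriction map \eqref{eq:reg-restrict} to the regular semisimple locus; since that argument is carried out with mixed structures in place, the weight bookkeeping comes along for free. The remaining content of Corollary \ref{final} is then purely formal: it is the combination of two displayed identities with the standard fact that a trace on a split semisimple algebra is the sum of its irreducible characters weighted by the $\varpi_\chi$.
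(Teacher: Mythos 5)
Your proposal is correct and matches the paper's (implicit) argument: the paper states Corollary~\ref{final} with no separate proof precisely because it is the immediate combination of the preceding proposition (giving $H^\bullet_{G_\Delta}(\cV_\nu)\cong\Hom_W(V_\nu,E)$ as bigraded vector spaces, hence $P_{\cV_\nu}=M_\nu$) with Proposition~\ref{weight}, exactly as you do. One small remark: the displayed formula in the statement as printed has a free index $\nu$ and only sums over $\chi$; your version with the double sum $\sum_{\chi,\nu}$ is the intended reading, and the substitution $\{\nu,\chi\}\leftrightarrow\{\chi,\nu\}$ is harmless since Lusztig's Fourier transform matrix is symmetric.
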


\begin{remark}\label{smash}
  The above identification of the cohomology of a character sheaf with a multiplicity space reflects a deeper fact about the structure of $\GD$-equivariant sheaves on $G$: by an extension of the arguments above, one can construct a natural isomorphism $\Ext^\bullet_{G_{\Delta}}(\IndGT)\cong \Qlb[W]\#E$ as bigraded algebras. This provides an equivariant version of a theorem of Ginzburg \cite[Theorem 8.1]{Ginzburg-character}. \end{remark}

\bibliography{./gen}
\bibliographystyle{amsalpha}

\end{document}